\documentclass[a4paper,12pt]{article}

\usepackage[pdftex]{graphicx}
\usepackage{amsmath,amssymb,amsfonts,geometry}
\usepackage{comment}
\usepackage{enumerate}
\usepackage{bm}
\usepackage{ytableau}
\ytableausetup{boxsize=1em}
\usepackage{tikz}
\usetikzlibrary{positioning}

\geometry{margin=1.8cm} 


\usepackage{amsthm}
\theoremstyle{definition}
\newtheorem{Thm}{Theorem}[section]
\newtheorem*{Thm*}{定理}
\newtheorem{Def}[Thm]{Definition}
\newtheorem*{Def*}{定義}
\newtheorem{Prop}[Thm]{Proposition}
\newtheorem*{Prop*}{命題}
\newtheorem{Lem}[Thm]{Lemma}
\newtheorem*{Lem*}{補題}

\newtheorem*{Cor*}{系}
\newtheorem{Exam}[Thm]{Example}
\newtheorem*{Exam*}{例}
\newtheorem{Rem}[Thm]{Remark}
\newtheorem*{Rem*}{注意}

\newtheorem*{Exer*}{問題}

\newtheorem*{Conj*}{問題}

\numberwithin{equation}{section} 

\newcommand{\too}{\longrightarrow}
\newcommand{\N}{\mathbb{N}}

\newcommand{\R}{\mathbb{R}}
\newcommand{\C}{\mathbb{C}}

\DeclareMathOperator{\Tr}{Tr}
\DeclareMathOperator{\tr}{tr}
\DeclareMathOperator{\diag}{diag}
\providecommand{\abs}[1]{\lvert#1\rvert}

\newcommand{\E}{\mathbb{E}}
\DeclareMathOperator{\NC}{NC}
\DeclareMathOperator{\Wg}{Wg}

\DeclareMathOperator{\M}{M}
\DeclareMathOperator{\fc}{R}
\newcommand{\ncmu}{\tilde{\mu}}
\renewcommand{\S}{\mathfrak{S}}

\newcommand{\K}{K}
\newcommand{\m}{\mathfrak{m}}
\newcommand{\pp}{\mathfrak{p}} 
\renewcommand{\gg}{\widehat{\m}} 
\renewcommand{\kappa}{\widehat{\tau}}

\title{The Spectra of Principal Submatrices in Rotationally Invariant Hermitian Random Matrices and the Markov--Krein Correspondence}
\author{Katsunori Fujie and Takahiro Hasebe}
\date{\today}

\begin{document}

\maketitle


\begin{abstract}
We prove a concentration phenomenon on the empirical eigenvalue distribution (EED) of the principal submatrix 
in a random hermitian matrix whose distribution is invariant under unitary conjugacy; for example,
this class includes GUE (Gaussian Unitary Ensemble) and Wishart matrices. 
More precisely, if the EED of the whole matrix converges to some deterministic probability measure $\m$, then its fluctuation from the EED of the principal submatrix, after a rescaling, concentrates at the Rayleigh measure (in general, a Schwartz distribution) associated with $\m$ by the Markov--Krein correspondence.
For the proof, we use the moment method with Weingarten calculus and free probability.
At some stage of calculations, the proof requires a relation between the moments of the Rayleigh measure and free cumulants of $\m$. This formula is more or less known, but we provide a different proof by observing a combinatorial structure of non-crossing partitions.
\end{abstract}



\section{Introduction}
The Markov--Krein correspondence 
\begin{equation}\label{eq:MK0} 
  \int_\R \frac{1}{1-z x} \,d\m(x) = \exp\left[\int_\R\log\frac1{1-zx}\,d\tau(x)\right], \qquad z \in \C\setminus \R
\end{equation}
provides a bijection between the probability measures $\m$ on $\R$, called transition measures, and certain Schwartz distributions $\tau$. In many examples $\tau$ is a signed measure, and in such a case $\tau$ is called the Rayleigh measure of $\m$. In general, $\tau$ is the derivative (in the sense of Schwartz distribution) of a so-called Rayleigh function; see \cite{Kerov:Interlacing} for further details. The Markov--Krein correspondence appears in different contexts to describe interlacing sequences: limit shapes of large random Young diagrams \cite{LS77,VK77,Biane,Biane2}; roots of two orthogonal polynomials of large consecutive degrees \cite{Kerov:root}; eigenvalues of large random matrices and of their principal submatrices, first in the case of randomly rotated real Wigner matrices \cite{Kerov:root}, and then Wigner and Wishart matrices (without random rotation) \cite{Bufetov}.  There are also situations where the distribution $\tau$ above appears as a probability measure: Poisson--Dirichlet processes (see \cite[Section 4.1]{Kerov:Interlacing} and references therein); 
self-decomposable distributions for monotone convolution \cite{FHS20}; Harish-Chandra--Izykson--Zuber integral of rank one at a high temperature regime \cite{MP}. The reason why the same correspondence appears in different contexts seems still unclear to the authors. 

In this paper, we prove a concentration phenomenon analogous to those in \cite{Bufetov,Kerov:root} in the setting of rotationally invariant hermitian random matrices, which was posed as a conjecture in \cite{Goel-Yao}. 
Let $X_N$ be a hermitian random matrix of size $N$ whose distribution is invariant under conjugacy by unitary matrices and let $\Lambda_N =(\lambda_{1}^{(N)} \le \cdots \le \lambda_{N}^{(N)})$ be its eigenvalues. 
It is known that a diagonalization $X_N = U_N D_N {U_N}^*$ exists, where $D_N= \diag(\lambda_1^{(N)}, \lambda_2^{(N)}, \dots, \lambda_N^{(N)})$ and $U_N$ is a Haar unitary random matrix of size $N$ and independent of $D_N$ (see \cite[Proposition 6.1]{Collins-Male}).

For the principal submatrix $\tilde{X}_N$ made by removing the last row and column of $X_N$, Cauchy's interlacing law says that the eigenvalues $\tilde\Lambda_N = (\tilde\lambda_{1}^{(N)} \le \cdots \le \tilde\lambda_{N-1}^{(N)})$ of $\tilde{X}_N$ interlace with $\Lambda_N$ (see \cite[Exercise 1.3.14]{Tao}): 
\[
  \lambda_{1}^{(N)} \le \tilde\lambda_{1}^{(N)} \le \lambda_{2}^{(N)} \le \tilde\lambda_{2}^{(N)} \le \cdots \le \lambda_{N-1}^{(N)} \le \tilde\lambda_{N-1}^{(N)} \le \lambda_{N}^{(N)}.
\]
In many examples,  the empirical eigenvalue distribution $\m_N= (1/N)\sum_{i=1}^N\delta_{\lambda_i^{(N)}}$ of the random matrix $X_N$ converges, as $N\to\infty$, to a non-random probability measure, and we do assume so. Then it is not hard to see (at least with a mild assumption) that the empirical eigenvalue distribution $\tilde\m_N$ of $\tilde{X}_N$ also converges to the same limit. A question is how the difference $\m_N-\tilde\m_N$ behaves. Our main result roughly says that the rescaled difference $N(\m_N-\tilde\m_N)$, or equivalently, the Rayleigh measure
\begin{equation*}
\kappa_N = \sum_{i=1}^{N}\delta_{\lambda_{i}^{(N)}} - \sum_{j=1}^{N-1} \delta_{\tilde\lambda_{j}^{(N)}},   
\end{equation*}
 is close to the Rayleigh measure $\tau_N$ linked to the transition measure $\m_N$ by the Markov--Krein correspondence. Note that $\tau_N$ is of the form
\begin{equation*}\label{eq:tau}
\tau_N = \sum_{i=1}^{N} \delta_{\lambda_{i}^{(N)}} - \sum_{j=1}^{N-1} \delta_{\eta_{j}^{(N)}}, 
\end{equation*}
where $(\eta_{1}^{(N)} \le \cdots \le \eta_{N-1}^{(N)})$ is a sequence also interlacing with $\Lambda_N$ (see \cite[Eq.\ (2)]{Kerov:Interlacing}). 

Since our arguments are based on the moment method, we denote by $\M_k(\zeta)$ for simplicity the $k$-th moment of a measure or Schwartz distribution $\zeta$ when it is well defined.  
It should be noted here that if a probability measure $\m$ has finite moments of all orders, then $\tau$ defined via \eqref{eq:MK0} also has finite moments of all orders (see \cite[Theorem A  (d)]{AD} and \cite[Section 3.4]{Kerov:Interlacing}). Furthermore, for convenience of statements, let $\gg_N$ be the transition measure associated to the Rayleigh measure  $\kappa_N$; then the main result can alternatively be phrased that $\gg_N$ is close to $\m_N$.  

The precise statement of the main result is as follows, which answers to a conjecture announced by Goel and Yao \cite{Goel-Yao}. 
\begin{Thm}\label{main}
Let $\m_N,\tau_N,\gg_N,\kappa_N$ be as above, $\m$ be a probability measure on $\R$ and $\tau$ be related to $\m$ via \eqref{eq:MK0}. Assume that
\begin{equation}\label{eq:ass1}
\sup_{N\ge1} \E[\M_k(\m_N)] <\infty \quad \text{and}\quad \M_k(\m)<\infty, \qquad  k \in 2\N.   
\end{equation}
and $\m_N$ converges in moments to $\m$ in probability:    
\begin{equation}\label{eq:weak_moment1}
\lim_{N\to\infty}\mathbb P [\abs{\M_k(\m_N) - \M_k(\m)} \ge \epsilon] =0, \qquad  k \in \N,~  \epsilon>0.  
\end{equation} 
Then we have
\begin{equation*}
\lim_{N\to\infty}\|\M_k(\kappa_N) - \M_k(\tau)\|_{L^2} =0, \qquad  k \in \N, 
\end{equation*}
and 
\begin{equation*}
\lim_{N\to\infty}\mathbb P [\abs{\M_k(\gg_N) - \M_k(\m)}\ge \epsilon]=0, \qquad  k \in \N,~  \epsilon>0.
\end{equation*}
In particular, if the moment problem for $\{\M_k(\m)\}_{k\ge1}$ is determinate then $\gg_N$ weakly converges to $\m$ in probability: 
\begin{equation*}
\lim_{N\to\infty}\mathbb P \left[\left|\int_\R f(x)\, \gg_N(dx) -\int_\R f(x)\, \m(dx) \right| \ge \epsilon\right ]=0, \qquad  f \in C_b(\R), ~  \epsilon>0. 
\end{equation*}
\end{Thm}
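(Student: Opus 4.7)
The plan is to use the moment method, starting from the trace identity $\M_k(\kappa_N) = \Tr(X_N^k) - \Tr(\tilde X_N^k)$. Writing $\tilde X_N = PX_N P$ with the orthogonal projection $P = I - Q$ onto the first $N-1$ coordinates (so $Q = e_N e_N^*$) and using $\Tr((PX_N P)^k) = \Tr((X_N(I-Q))^k)$, a multilinear expansion together with the identity $\Tr(X_N^{k_1} Q \cdots X_N^{k_r} Q) = \prod_{i=1}^r (X_N^{k_i})_{NN}$ (from $Q = e_Ne_N^*$ and cyclicity of trace) yields
\[
\M_k(\kappa_N) = \sum_{\emptyset \neq T \subseteq \{1,\ldots,k\}} (-1)^{|T|+1} \prod_{i=1}^{|T|}(X_N^{k_i(T)})_{NN},
\]
where $(k_1(T),\ldots,k_{|T|}(T))$ is the cyclic gap pattern of $T$ around $\{1,\ldots,k\}$. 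Via $X_N = U_N D_N U_N^*$ each factor reads $(X_N^{k_i})_{NN} = \sum_\ell |(U_N)_{N\ell}|^2(\lambda_\ell^{(N)})^{k_i}$.

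To compute $\E[\M_k(\kappa_N)]$ I condition on $\Lambda_N$ and apply Weingarten calculus to the Haar unitary $U_N$. The joint moment $\E[\prod_i |(U_N)_{N\ell_i}|^2]$ over distinct indices equals $(N-1)!/(N+|T|-1)! \sim N^{-|T|}$, and the corresponding distinct-index sum reproduces $\prod_i \M_{k_i(T)}(\m_N)$ up to $O(1/N)$ coincidence corrections. Together with \eqref{eq:ass1}--\eqref{eq:weak_moment1} (which supply uniform integrability and convergence in probability of each $\M_j(\m_N)$), this gives
\[
\lim_{N\to\infty} \E[\M_k(\kappa_N)] = \sum_{\emptyset \neq T \subseteq \{1,\ldots,k\}} (-1)^{|T|+1} \prod_{i=1}^{|T|} \M_{k_i(T)}(\m).
\]

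The next step is to recognize this limit as $\M_k(\tau)$. Setting $F_\m(z) = \int (1-zx)^{-1}\,d\m(x) = 1 + \sum_{j\ge 1}\M_j(\m) z^j$, \eqref{eq:MK0} gives $\log F_\m(z) = \sum_{k \ge 1} \M_k(\tau) z^k/k$, so $\M_k(\tau) = k\cdot [z^k]\log F_\m(z)$. Expanding $\log F_\m$ by the Taylor series of $\log(1+\cdot)$ yields
\[
\M_k(\tau) = \sum_{r=1}^k \frac{k(-1)^{r-1}}{r} \sum_{(k_1, \ldots, k_r)} \prod_{i=1}^r \M_{k_i}(\m),
\]
with the inner sum over ordered compositions of $k$. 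The identity $\sum_{T\colon |T|=r}\prod_i \M_{k_i(T)}(\m) = \tfrac{k}{r}\sum_{(k_1,\ldots,k_r)}\prod_i\M_{k_i}(\m)$, which holds because each ordered composition arises from $k_r$ subsets and cyclic averaging replaces $k_r$ by $k/r$, matches this with the subset sum. The same identity admits a more conceptual combinatorial derivation via non-crossing partitions and free cumulants, as the paper develops.

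Finally, for $L^2$-convergence I compute $\E[\M_k(\kappa_N)^2]$ by the same Weingarten expansion applied to products indexed by pairs $(T,T')$ of subsets. Distinct-index contributions factor across the two copies, giving $\lim_N\E[\M_k(\kappa_N)^2] = \M_k(\tau)^2$, so $\mathrm{Var}[\M_k(\kappa_N)] \to 0$. The convergence of $\gg_N$ then follows because each $\M_k(\gg_N)$ is a fixed polynomial in $\{\M_j(\kappa_N)\}_{j \le k}$ obtained by inverting \eqref{eq:MK0}, so convergence in probability transfers; weak convergence under moment determinacy is standard. The main obstacle will be the variance bound, which requires controlling the second-order Weingarten expansion and verifying that all non-factorizing coincidence contributions are $o(1)$.
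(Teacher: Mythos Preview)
Your approach is correct and takes a genuinely different, more elementary route than the paper's.

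The paper computes $\E\circ\Tr[(DUPU^*)^k]$ by full Weingarten calculus over pairs $(\sigma,\pi)\in\S_k\times\S_k$, extracts the leading-$N$ contribution as $\sum_{\rho\in\NC(k)}|K(\rho)|\,\E[\fc_\rho(\m_N)]$, and then invokes the separately proved identity $\M_k(\tau)=\sum_{\rho\in\NC(k)}|K(\rho)|\,\fc_\rho(\m)$ (Theorem~\ref{main2}) to recognize the limit. For the second moment it repeats this on $\S_{2k}$ relative to $\gamma_k^{(1)}\gamma_k^{(2)}$, with a fairly delicate telescoping $I=P+Q$ expansion inside the $\Tr_\pi$-functional. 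You instead expand $I-Q$ \emph{before} integrating over $U$, use the rank-one structure $Q=e_Ne_N^*$ to reduce everything to products of the scalars $(X_N^{j})_{NN}=\sum_\ell |U_{N\ell}|^2\lambda_\ell^{j}$, and need only the Dirichlet moments of the last row $(|U_{N1}|^2,\dots,|U_{NN}|^2)$ rather than the full Weingarten function. Your identification of the limit via the $\log F_\m$ expansion together with the cyclic-averaging identity $\sum_{|T|=r}\prod_i\M_{k_i(T)}(\m)=\tfrac{k}{r}\sum_{(k_1,\dots,k_r)}\prod_i\M_{k_i}(\m)$ is direct and completely bypasses free cumulants. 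The second moment also factorizes transparently in your setup: the $(T,T')$-expansion produces a Dirichlet moment in $|T|+|T'|$ variables whose leading term is multiplicative across the two copies, and all coincidence corrections are $O(1/N)$ for exactly the same reason as in the first-moment step---so the ``main obstacle'' you flag is no harder than what you have already handled.

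What the paper's route buys is precisely the structural formula of Theorem~\ref{main2} linking Rayleigh-measure moments to free cumulants via Kreweras complements, which is of independent combinatorial interest and motivates the Kreweras-decomposition machinery. Your argument is more economical for the stated theorem but does not expose that connection.
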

\begin{Rem}
\begin{enumerate}[\rm(i)] 
\item Since the relation between the moments $\{ \M_n(\m) \}_{n \in \N}$ (resp.\ $\{ \M_n(\m_N) \}_{n \in \N}$) and $\{ \M_k(\tau) \}_{k \in \N}$ (resp.\ $\{ \M_k(\tau_N) \}_{k \in \N}$) is the same as that between complete symmetric functions and Newton power sums (see \eqref{eq:power&comp} below), the convergence \eqref{eq:weak_moment1} holds if and only if $\tau_N$ converges in moments to $\tau$ in probability.
\item \eqref{eq:ass1} and \eqref{eq:weak_moment1} imply the convergence of moments in $L^p$ norm for every $p \in [1,\infty)$; see Proposition \ref{prop:Lp}. 

\item The assumptions  \eqref{eq:ass1} and \eqref{eq:weak_moment1} are satisfied by appropriately  normalized Gaussian Unitary Ensemble (GUE) \cite[Theorem 4.1.5]{HP}, where $\m$ is Wigner's semicircle law $(1/(2\pi))\sqrt{4-x^2}\,dx$.  For GUE (actually, more general Wigner matrices), a finer result on the fluctuation of $\kappa_N$ from $\tau$ is also known in \cite{ES2017} stated in the language of rectangular Young diagrams; see also \cite{Sodin2017}.

\end{enumerate}
\end{Rem}

The proof is based on Weingarten calculus and free probability which allow us to compute the moments of the principal submatrix: 
\begin{equation}\label{eq:key_moments}
\E \circ \Tr[(\tilde{X}_N)^{k}] = \E\circ \Tr[D_NU_NP_N{U_N}^* D_NU_N P_N{U_N}^* \cdots D_NU_N P_N{U_N}^*],    
\end{equation}
where $ P_N=\diag(1,1,\dots,1,0)$. 

In fact, the joint distribution of $(\tilde\lambda_{1}^{(N)} \le \cdots \le \tilde\lambda_{N-1}^{(N)})$ is explicit under the condition that $(\lambda_{1}^{(N)} \le \cdots \le \lambda_{N}^{(N)})$ a constant sequence; it is proportional to the Vandermond determinant  \cite[Proposition 4.2]{Ba2001} (see also the expository paper \cite{Faraut}). Using this explicit formula might be an alternative approach for computing \eqref{eq:key_moments} and hence for a proof of Theorem \ref{main}; however, the authors are not sure whether this direction is promising.  

At some stage of calculations of \eqref{eq:key_moments} with Weingarten calculus, it turns out that the following formula \eqref{eq:tau&fcmu} is crucial.  

 \begin{Thm}\label{main2} 
 Suppose that $\m$ is a probability measure on $\R$ with finite moments of all orders and $\tau$ be defined via \eqref{eq:MK0}. Then the formula 
   \begin{equation}\label{eq:tau&fcmu}
     \M_k(\tau) = \sum_{\rho \in \NC(k)} (k+1-\abs{\rho})\fc_\rho(\m)
   \end{equation}
  holds for every $k \in \N$, where $\NC(k)$ is the set of non-crossing partitions of $\{1, \dots, k \}$ and $\fc_\rho(\m)$ is the free cumulant of $\m$.
 \end{Thm}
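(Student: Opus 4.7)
\emph{Plan.} My strategy reduces \eqref{eq:tau&fcmu} to a purely combinatorial identity via formal generating functions. Write $M(z) := 1 + \sum_{n \ge 1} \M_n(\m) z^n$ and $C(w) := \sum_{n \ge 1} \fc_n(\m) w^n$. Expanding \eqref{eq:MK0} around $z=0$ and taking logarithms with $\log(1-zx)^{-1} = \sum_{k\ge 1}(zx)^k/k$ yields
\[
  \log M(z) \;=\; \sum_{k \ge 1} \frac{\M_k(\tau)}{k}\, z^k.
\]
Speicher's moment/free-cumulant relation in functional form is $M(z) = 1 + C(zM(z))$, so with the substitution $w := zM(z)$ one has $M(z) = 1 + C(w)$ and $w = z(1+C(w))$. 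Applying Lagrange--B\"urmann inversion with kernel $\phi(w) = 1 + C(w)$ to $H(w) = \log(1+C(w))$, the extraction
\[
  \frac{\M_k(\tau)}{k} \;=\; [z^k]\log M(z) \;=\; \frac{1}{k}[w^{k-1}] \frac{C'(w)}{1+C(w)}(1+C(w))^k \;=\; \frac{1}{k}[w^{k-1}] C'(w)(1+C(w))^{k-1}
\]
collapses, via $C'(w)(1+C(w))^{k-1} = \frac{1}{k}\frac{d}{dw}(1+C(w))^k$, to
\[
  \M_k(\tau) \;=\; [w^k](1+C(w))^k.
\]

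\emph{Combinatorial identification.} Expanding the right-hand side by the binomial theorem, extracting the $w^k$-coefficient, and regrouping compositions of $k$ by their underlying integer partition $\lambda \vdash k$ of length $\ell(\lambda)$ with multiplicities $m_i(\lambda)$, one arrives at
\[
  [w^k](1+C(w))^k \;=\; \sum_{\lambda \vdash k} \binom{k}{\ell(\lambda)}\frac{\ell(\lambda)!}{\prod_i m_i(\lambda)!}\prod_i \fc_i(\m)^{m_i(\lambda)} \;=\; \sum_{\lambda \vdash k} \frac{k!}{(k-\ell(\lambda))!\,\prod_i m_i(\lambda)!}\prod_i \fc_i(\m)^{m_i(\lambda)}.
\]
Kreweras' enumeration of non-crossing partitions by type states that the number of $\rho \in \NC(k)$ of type $\lambda$ equals $\frac{k!}{(k+1-\ell(\lambda))!\,\prod_i m_i(\lambda)!}$, and so the coefficient above is exactly $(k+1-\ell(\lambda))$ times this count. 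Re-summing over $\NC(k)$ grouped by type yields \eqref{eq:tau&fcmu}.

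\emph{Main obstacle.} The crux is the Lagrange--B\"urmann step, which depends on recognizing that the substitution $w = zM(z)$ dictated by the moment/cumulant functional equation is exactly the one that turns $[z^k]\log M(z)$ into a clean coefficient extraction on $(1+C(w))^k$; after that, the matching is elementary bookkeeping against Kreweras' formula. If one prefers a purely bijective proof, as the remark preceding the theorem hints, the challenge becomes interpreting $[w^k](1+C(w))^k$ directly as a weighted sum over $\NC(k)$ decorated by a marked element of the Kreweras complement (which has $k+1-|\rho|$ blocks), and constructing such a bijection cleanly would be the main technical hurdle of that alternative route.
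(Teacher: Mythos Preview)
Your proof is correct and takes a genuinely different route from the paper's own argument. The paper proceeds by induction on $k$: starting from the Newton-type recursion $\M_k(\tau) = k\M_k(\m) - \sum_{r=1}^{k-1}\M_r(\tau)\M_{k-r}(\m)$, it reduces \eqref{eq:tau&fcmu} to the identity $\sum_{\rho\in\NC(k)}(|\rho|-1)\fc_\rho(\m) = \sum_{r=1}^{k-1}\M_r(\tau)\M_{k-r}(\m)$, and then interprets the right-hand side as a sum over ways of inserting a smaller non-crossing partition $\underline{\rho}$ at a block of the Kreweras complement of an outer partition $\overline{\rho}$. The heart of the paper's proof is a new combinatorial notion (``Kreweras decomposition'') together with a nesting lemma showing that each $\rho\in\NC(k)$ admits exactly $|\rho|-1$ such decompositions; this is a bijective explanation of the coefficient $k+1-|\rho|=|K(\rho)|$.

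Your approach, by contrast, is analytic: you package the Markov--Krein relation as $\log M(z)=\sum_k \M_k(\tau)z^k/k$, invoke the functional equation $M(z)=1+C(zM(z))$ for free cumulants, and apply Lagrange--B\"urmann inversion to obtain the clean closed form $\M_k(\tau)=[w^k](1+C(w))^k$. The matching with \eqref{eq:tau&fcmu} then follows from Kreweras' classical enumeration of $\NC(k)$ by block type. What you gain is brevity and a striking intermediate identity; what the paper's argument gains is a self-contained bijective structure that does not appeal to Kreweras' counting formula as a black box and makes the role of the Kreweras complement explicit at the level of individual partitions. The paper also mentions (but does not carry out) a third route via Lassalle's symmetric-function identities; yours is distinct from both.
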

This formula gives an explicit combinatorial relation between two bases in the Kerov-Olshanski algebra: the moments of $\tau$ and free cumulants of $\m$.  It can be easily proved by combining known formulas for complete symmetric functions as follows. The moments of $\tau$ and the free cumulants of $\m$ can be identified with the elements $\{p_n(A)\}_{n\ge1}$ and $\{(-1)^ne_n^*(A)\}_{n\ge1}$ in \cite{Lassalle}, respectively; the latter fact is noted on page 2242 of \cite{Lassalle}. Combining (4.5) and the formula right before (4.10) in \cite{Lassalle} allows one to express $\{p_n^*(A)\}_{n\ge1}$ in terms of $\{e_n(A)\}_{n\ge1}$ as a sum over integer partitions. Applying the involution gives a formula that expresses $\{p_n(A)\}_{n\ge1}$ in terms of $\{(-1)^ne_n^*(A)\}_{n\ge1}$. This formula can be transformed into the sum over non-crossing partitions via \cite[Corollary 9.12]{Nica-Speicher}, which amounts to Theorem \ref{main2}.  

In this paper, we provide a different proof of Theorem \ref{main2} based on non-crossing partitions. A key observation is that the coefficient $k+1-|\rho|$ coincides with the cardinality of the Kreweras complement $K(\rho)$. Since the coefficients are so simple, one may expect that there is a combinatorial structure behind. Indeed, we will introduce the notion of ``Kreweras decomposition'' of a non-crossing partition and count the number of such decompositions as a crucial ingredient of the proof.

After this introduction, this paper is structured as follows. Section \ref{sec2} consists of preliminaries on Weingarten calculus, free probability and symmetric groups. Section \ref{sec3} provides the proof of Theorem \ref{main}, as well as an alternative proof of Theorem \ref{main2} as mentioned.  Some results on convergence of random measures are proved in Appendix \ref{App1}.


\section{Preliminaries}\label{sec2}
In this section, we introduce standard notions in free probability and related fields for later use in the proof of the main results.

\subsection{Weingarten calculus}\label{sec:WG}
Computation of mixed moments of Haar unitary random matrices $U_N$ and deterministic matrices is called Weingarten calculus. 
For $\sigma \in \S_k$, let $\Tr_\sigma[A_1,A_2,\dots, A_k]$ be the product of traces according to the cycle decomposition of $\sigma$; for example if $\sigma =(1,3,2,5)(4)(6,9)(7,8)$ then $\Tr_\sigma[A_1,A_2,\dots, A_9] = \Tr(A_1A_3A_2A_5)\Tr(A_4)\Tr(A_6A_9)\Tr(A_7A_8)$. Similarly, for a sequence $\{\alpha_n\}_{n\ge1} \subset \C$ we define $\alpha_\sigma$ to be the product of $\alpha_n$'s according to the sizes of cycles; in the above example, $\alpha_\sigma=\alpha_4\alpha_1{\alpha_2}^2$. 

  Let $A_i, B_i \; (i= 1, \dots, k)$ be $N \times N$ matrices.
  Then
  \begin{align*}
    &\E \circ \Tr_\sigma[A_1 U_N B_1 {U_N}^*, \dots, A_k U_N B_k {U_N}^*] \\
    &\qquad = \sum_{\substack{\sigma_1, \sigma_2, \sigma_3 \in \S_k \\ \sigma_1 \sigma_2 \sigma_3 = \sigma}}
    \Tr_{\sigma_1}[A_1, \dots, A_k] \Tr_{\sigma_2}[B_1, \dots, B_k] \Wg(\sigma_3, N)
  \end{align*}
  for all $\sigma \in \S_k$.  In particular, in the case of $\sigma= \gamma_k = (1, 2, \dots, k)$ the above formula specializes to
  \begin{align}
   & \E \circ \Tr[(A_1 U_N B_1 {U_N}^*) \cdots (A_k U_N B_k {U_N}^*)] \notag \\
   &\qquad  = \sum_{\sigma, \pi \in \S_k}
    \Tr_{\sigma}[A_1, \dots, A_k] \Tr_{\pi}[B_1, \dots, B_k] \Wg(\pi^{-1}\sigma^{-1}\gamma_k, N),  \label{eq:WG_formula}
  \end{align}
 see \cite[Proposition 2.3]{Collins-Sniady}.
The coefficients $\Wg(\sigma, N)$ are called the Weingarten function. Its asymptotic behavior for large $N$ is known in the form
\begin{equation}\label{eq:asymptotic_Weingarten}
    N^{k+\abs{\sigma}} \Wg(\sigma,N) = \mu_k(\sigma) + O \left( \frac{1}{N^2} \right), \qquad \sigma \in \S_k.  
\end{equation}
The number $\abs{\sigma}$, called the length function, is the minimal number $l$ for which $\sigma$ can be written as a product of $l$ transpositions, and the number $\mu_k(\sigma)$ above is expressed in terms of the Catalan numbers $C_n = (2n)!/(n!(n+1)!)$ as 
\begin{equation}\label{eq:moebius_s}
\mu_k(\sigma) = \prod_{1 \le j \le l}(-1)^{\abs{\pi_j}}C_{\abs{\pi_j}} 
\end{equation}
where  $\sigma = \pi_1 \cdots \pi_l$ is the cycle decomposition of $\sigma$;  see \cite[Theorem 2.7]{Collins-Matsumoto}

\subsection{Free cumulants}\label{sec:free}
This section summarizes notations and facts on free cumulants. The reader is referred to \cite{Nica-Speicher} for further details. Let $L$ be a finite linearly ordered set. A partition of $L$ is the collection of nonempty disjoint subsets of $L$ whose union is $L$. For a partition $\rho =\{B_1,B_2, ..., B_r\}$ of $L$, each element $B_i$ is called a block and the cardinality $r$ is denoted by $|\rho|$.
It is referred to as a crossing partition if there are two blocks $B_i, B_j~(i\ne j)$ and elements $a,b \in B_i, c,d \in B_j$ such that $a<c<b<d$. Otherwise, it is called a non-crossing partition. The set of the non-crossing partitions of $L$ will be denoted by $\NC(L)$.  In particular, when $L=[k]:=\{1,2,\dots,k\}$, $\NC([k])$ is simply denoted by $\NC(k)$.

For $\nu,\rho \in \NC(k)$, the notation $\nu\le \rho$ means that for every $B \in \nu$ there exists $C \in \rho$ such that $B\subseteq C$. This defines a poset structure on $\NC(k)$.  The maximum element regarding this partial order is $1_k := \{[k]\}$, the partition consisting of one block $[k]$, and the minimum is $0_k:=\{\{1\},\{2\},\dots, \{k\}\}$.

The Kreweras complement of a non-crossing partition $\rho \in \NC(k)$ is defined as follows. Inserting additional points $[\overline{k}]:=\{\overline{1}, \overline{2}, \dots, \overline{k}\}$ to $[k]$, suppose that  $L_k=\{1,\overline{1}, 2,\overline{2}, \dots, k,\overline{k}\}$ is a linearly ordered set with the order as displayed.  Take the maximal non-crossing partition $\nu$ of $[\overline{k}]$ such that $\rho \cup \nu \in \NC(L_k)$. Then deleting bars over the integers, $\nu$ is called the Kreweras complement of $\rho$ and denoted by $K(\rho)$. For convenience,  we sometimes keep the bars and regard $K(\rho)$ as a non-crossing partition on $[\overline{k}]$.

\begin{Exam}
If $\rho = \{\{1,7\}, \{2,5,6\}, \{3\}, \{4\}, \{8,9\} \}$ then the following picture
 \begin{center}
      \begin{tikzpicture}[scale=0.5]
        \foreach \x in {1,...,9}{
          \draw[circle,fill] (2*\x-1,0)circle[radius=1mm];
               \node at (2*\x-1,-0.82) {$\x$};
        }
        \foreach \x in {1,...,9}{
          \draw[circle] (2*\x,0)circle[radius=1mm];
               \node at (2*\x,-0.73) {$\overline{\x}$};
        }
        \foreach \x/\y in {1/7,2/5,5/6,8/9} {
           \draw(2*\x-1,0) to[bend left=45] (2*\y-1,0);
        }
        \foreach \x/\y in {1/6,2/3,3/4,7/9} {
           \draw[dashed](2*\x,0) to[bend left=45] (2*\y,0);
        }
      \end{tikzpicture}
      \end{center}
  shows that $K(\rho) = \{\{1,6\}, \{2,3,4\},\{5\},\{7,9\},\{8\}\}$. 
\end{Exam}

For a sequence $\{\alpha_n\}_{n \in \N} \subset \C$ and a partition $\rho$ of $[k]$, define
\begin{equation}
\alpha_\rho := \prod_{B \in \rho} \alpha_{|B|}.
\end{equation}

For a probability measure $\m$ on $\R$ with finite moments of all orders, the free cumulants $\{ \fc_k(\m) \}_{k \in \N}$ of $\m$ are determined recursively by the moment--cumulant formula
\begin{equation}\label{eq:mc}
\M_\nu(\m) = \sum_{\substack{\rho \in \NC(k)\\ \rho \le \nu}} \fc_\rho(\m), \qquad \nu \in \NC(k), \quad k\in \N.
\end{equation}
Actually, it suffices to take $\nu=1_k, k=1,2,3,\dots$ to determine the free cumulants, and then the above formula can be proved for all $\nu \in \NC(k), k=1,2,3,\dots$. More explicitly, free cumulants can be expressed as
\begin{equation}\label{eq:cm}
\fc_\rho (\m) =  \sum_{\substack{\nu \in \NC(k) \\ \nu \le \rho}} \M_\nu(\m) \ncmu_{k}(\nu, \rho),  \qquad \rho \in \NC(k), \quad k\in\N,
\end{equation}
where $\ncmu_{k}$ is the M\"{o}bius function on the poset $\NC(k)$.

\subsection{Non-crossing partitions and symmetric groups} \label{subsec:Isomorph}

The set of non-crossing partitions can be embedded into the symmetric group. Here we collect needed facts. For further details, the reader is referred to \cite{Nica-Speicher}.

The length function (see Section \ref{sec:WG}) on symmetric groups satisfies the following properties: for all $\sigma, \pi \in \S_k$,
\begin{align}
      &\abs{\pi \sigma \pi^{-1}} = \abs{\sigma}, \\
      &\abs{\sigma \pi} \le \abs{\sigma} + \abs{\pi},   \label{eq:triangular}\\
      &\abs{\sigma \pi} \equiv \abs{\sigma} + \abs{\pi} \pmod 2. \label{eq:mod2}
  \end{align}
The number $\#(\sigma)$ of cycles in the cycle decomposition of $\sigma$ is known to satisfy
  \[
    \#(\sigma) + \abs{\sigma} = k.
  \]

Let $d$ be a metric on $\S_k$ defined by $d(\sigma, \pi) = \abs{\sigma^{-1} \pi}$. The geodesic set from the unit $e$ to $\gamma_k := (1, \dots, k)$ is defined by
\[
        \S_{\NC}(\gamma_k) = \{ \, \sigma \in \S_k \mid d(e, \sigma) + d(\sigma, \gamma_k) = d(e, \gamma_k) \; (= k-1) \,  \}
\]
  For $\sigma, \pi \in \S_{\NC}(\gamma_k)$, denote by $\sigma \le \pi$ if $\sigma$ and $\pi$ are on a common geodesic and $d(e,\sigma) \le d(e,\pi)$, namely, if $d(e, \sigma) + d(\sigma, \pi) = d(e, \pi)$, or equivalently,  $\abs{\sigma} + \abs{\sigma^{-1}\pi} = \abs{\pi}$.

For a partition $\rho\in \NC(k)$, each block $B=\{i_1,i_2,\dots, i_p\} \in\rho$ whose elements are arranged in the increasing order associates the cyclic permutation $\pi=(i_1, i_2,\dots, i_p)$, so that $\rho$ associates the permutation $\mathcal{P}_\rho := \pi_1 \pi_2 \cdots \pi_l$, where $l=\#(\mathcal{P}_\rho) = \abs{\rho}$.
This embedding becomes a poset isomorphism
  \[
    \mathcal{P} \colon \NC(k) \too \S_{\NC}(\gamma_k),
  \]
 see \cite[Proposition 23.23]{Nica-Speicher}. We need the following facts later: for $\sigma=\mathcal{P}_\nu, \pi=\mathcal{P}_\rho\in  \S_{\NC}(\gamma_k)$,
\begin{enumerate}[\rm(i)]
\item the relation $\nu \le \rho$ holds in $\NC(k)$ if and only if $\abs{\sigma} + \abs{\sigma^{-1}\pi} + \abs{\pi^{-1} \gamma_k} = k-1$,
\item  $\mu_k(\sigma^{-1}\pi) = \ncmu_k(\nu,\rho)$,
\item $\pi^{-1} \gamma_k = \mathcal{P}_{K(\rho)}$; in particular $\#(\pi^{-1}\gamma_k) = \abs{K(\rho)}$.
\end{enumerate}

Similar results hold for ${\gamma_k}^{(1)}{\gamma_k}^{(2)} \in \S_{2k}$ instead of $\gamma_{2k}$, where
\[
  {\gamma_k}^{(1)} = (1, \dots, k)(k+1) \cdots (2k) \qquad \text{and}\qquad {\gamma_k}^{(2)} = (1) \cdots (k) (k+1, \dots, 2k).
\]
Correspondingly, let
\[
  (1_k^{(1)},1_k^{(2)}) := \{ \{1, \dots, k\}, \{k+1, \dots, 2k\}\},
\]
then $\mathcal{P}_{(1_k^{(1)},1_k^{(2)})} = {\gamma_k}^{(1)}{\gamma_k}^{(2)}$.
Via the imbedding
\[
  \NC(k) \times \NC(k) \cong [0_{2k}, (1_k^{(1)},1_k^{(2)})] \subset \NC(2k)
\]
the restriction of the mapping $\mathcal P$ induces an isomorphism between $\NC(k) \times \NC(k)$ and
\begin{equation} \label{isomoph_part}
  \S_{\NC}({\gamma_k}^{(1)}{\gamma_k}^{(2)}) = \{ \, \sigma \in \S_{2k} \mid d(e, \sigma) + d(\sigma, {\gamma_k}^{(1)}{\gamma_k}^{(2)}) = d(e, {\gamma_k}^{(1)}{\gamma_k}^{(2)}) \; (= 2k-2) \,  \}.
\end{equation}
For $(\rho_1, \rho_2) \in \NC(k) \times \NC(k)$ and $\pi = \mathcal{P}_{(\rho_1,\rho_2)} \in \S_{\NC}({\gamma_k}^{(1)}{\gamma_k}^{(2)})$, the element $\pi^{-1}{\gamma_k}^{(1)}{\gamma_k}^{(2)}$ corresponds to  $(K(\rho_1), K(\rho_2))$ under the isomorphism $\mathcal P$, and in particular $\#(\pi^{-1}{\gamma_k}^{(1)}{\gamma_k}^{(2)}) = \abs{K(\rho_1)} + \abs{K(\rho_2)} = \abs{K(\rho)} + 1$, where $\rho = (\rho_1, \rho_2)$ is regarded as a partition in $\NC(2k)$.
Note that this relation can be clearly understood in terms of the relative Kreweras complement; however, we will not use this technical notion since it is not directly needed in this paper.


\section{Proof of the main results}\label{sec3}

\subsection{Proof of Theorem \ref{main2}} \label{subsec:submain}

Let us start to prove the combinatorial formula \eqref{eq:tau&fcmu} by induction on the degree $k$. In this subsection, we keep the assumptions and notation in Theorem \ref{main2}. 
To begin, the original formula for the Markov--Krein correspondence \eqref{eq:MK0} implies the recursive relation
\begin{equation}
  \M_k(\tau) = k\M_k(\m) - \sum_{r=1}^{k-1} \M_r(\tau) \M_{k-r}(\m), \qquad k \in \N, \label{eq:power&comp}
\end{equation}
which is exactly the relation satisfied by complete symmetric functions and Newton power sums \cite[(3.2.4) and Section 3.4]{Kerov:Interlacing}.

Thanks to the moment-cumulant formula \eqref{eq:mc}, the RHS of the desired formula (\ref{eq:tau&fcmu}) may be transformed into
\[
\begin{split}
  \sum_{\rho \in \NC(k)} (k+1-\abs{\rho})\fc_\rho(\m) &= k \sum_{\rho \in \NC(k)} \fc_\rho(\m) - \sum_{\rho \in \NC(k)} (\abs{\rho}-1)\fc_\rho(\m) \\
  &= k \M_k(\mu) - \sum_{\rho \in \NC(k)} (\abs{\rho}-1)\fc_\rho(\m).
\end{split}
\]
Hence, according to the recursive equation \eqref{eq:power&comp},  formula (\ref{eq:tau&fcmu}) is eventually equivalent to
\begin{equation} \label{eq:equi1}
  \sum_{\rho \in \NC(k)} (\abs{\rho}-1)\fc_\rho(\m) = \sum_{r=1}^{k-1} \M_r(\tau) \M_{k-r}(\m).
\end{equation}
 By the induction hypothesis up to the degree $k-1$ and the moment-cumulant formula, the RHS of (\ref{eq:equi1}) can be written as
  \begin{equation}\label{eq:sum}
     \sum_{r=1}^{k-1} \sum_{\substack{\overline{\rho} \in \NC(r) \\  \underline{\rho} \in \NC(k-r)}} |K( \overline{\rho})| \fc_{ \overline{\rho}}(\m) \fc_{ \underline{\rho}}(\m).
\end{equation}
The cardinality  $|K( \overline{\rho})|$ can be interpreted as the number of inserting $ \underline{\rho}$ into $\overline{\rho}$ in the following way:
\begin{enumerate}[\rm(P1)]
\item\label{P1} pick $r \in \{1,2,\dots,k-1\}$, $\overline{\rho} \in \NC(r)$ and $\underline{\rho}\in \NC(k-r)$;
\item\label{P2}  pick a block $B$ of $\K(\overline{\rho})$, where $K(\overline{\rho})$ is interpreted as a partition on the points $[\overline{r}]$ interlacing with $[r]$;
 \item\label{P3} substitute the partition $\underline{\rho}$ into the last point of $B$.
\end{enumerate}

The steps (P\ref{P2}) and (P\ref{P3}) provide a way to insert $\underline{\rho}$ into $\overline{\rho}$, which yields a non-crossing partition $\rho \in \NC(k)$; see also Example \ref{ex:insert}.  The sum \eqref{eq:sum} can then be expressed as
 \begin{equation}\label{eq:sum2}
 \sum_{\rho} \fc_\rho(\m),
\end{equation}
where $\rho$ runs over all the non-crossing partitions appearing as a result of (P\ref{P1})--(P\ref{P3}). Note that the same non-crossing partition $\rho$ may appear more than once, and the sum \eqref{eq:sum2} needs to count the multiplicity.
Actually,  in order to have (\ref{eq:equi1}),  we need to demonstrate that each $\rho \in \NC(k)$ appears exactly $\abs{\rho}-1$ times. To achieve this, we introduce the notion of Kreweras decomposition of a non-crossing partition, which describes the relation between $\rho, \overline{\rho}$ and $\underline{\rho}$ above.

\begin{Exam}\label{ex:insert}
For the non-crossing partitions $\overline{\rho} = \{\{1,7\},\{2,5,6\},\{3\},\{4\},\{8,9\}\}$ and $\underline{\rho}=\{\{1,3\},\{2\}\}$,
the Kreweras complement $K(\overline{\rho})$ is the partition described by the dashed curves below
 \begin{center}
      \begin{tikzpicture}[scale=0.5]
        \foreach \x in {1,...,9}{
          \draw[circle,fill] (2*\x-1,0)circle[radius=1mm];
               \node at (2*\x-1,-0.82) {$\x$};
        }
        \foreach \x in {1,...,9}{
          \draw[circle] (2*\x,0)circle[radius=1mm];
               \node at (2*\x,-0.73) {$\overline{\x}$};
        }
        \foreach \x/\y in {1/7,2/5,5/6,8/9} {
           \draw(2*\x-1,0) to[bend left=45] (2*\y-1,0);
        }
        \foreach \x/\y in {1/6,2/3,3/4,7/9} {
           \draw[dashed](2*\x,0) to[bend left=45] (2*\y,0);
        }
      \end{tikzpicture}
      \end{center}
and hence the Kreweras complement has the blocks $\{\overline{1}, \overline{6}\}, \{\overline{2},\overline{3},\overline{4}\}, \{\overline{5}\}, \{\overline{7},\overline{9}\}, \{\overline{8}\}$. According to (P\ref{P3}) we are allowed to place $\underline{\rho}$ at any point of $\{\overline{6}, \overline{4}, \overline{5}, \overline{8}, \overline{9}\}$. For example, if we choose $\overline{4}$ then the resulting non-crossing partition $\rho$ is
  \begin{center}
      \begin{tikzpicture}[scale=0.5]
        \foreach \x in {1,...,12}{
          \draw[circle,fill] (\x,0)circle[radius=1mm];
        }
        \foreach \x/\y in {1/10,2/8,8/9,11/12, 5/7} {
           \draw(\x,0) to[bend left=45] (\y,0);
        }
     \end{tikzpicture}
      \end{center}
\end{Exam}

\begin{Def}
\begin{enumerate}[(1)]
\item  For $\nu \in \NC(r)$, a Kreweras point of $\nu$ is the last point of a block of the Kreweras complement $\K(\nu)$ regarded as a partition on $[\overline{r}]$ that interlaces with $[r]$.

\item
  For $\rho \in \NC(k)$, a pair $(\overline{\rho}, \underline{\rho})$ of nonempty disjoint subsets of $\rho$ such that $\overline{\rho} \cup \underline{\rho} = \rho$ and the union of all elements of $\underline{\rho}$ is an interval of $[k]$, that is, there exist some $i<j$ such that
  \[
  \bigcup_{V \in \underline{\rho}} V = [i,j] =\{i, i+1, \dots, j \}.
  \]
  If the position of $\underline{\rho}$ is a Kreweras point of $\overline{\rho}$, then we call $(\overline{\rho}, \underline{\rho})$ a Kreweras decomposition of $\rho$, $\overline{\rho}$ an outer partition of $\rho$ and $\underline{\rho}$ an inner partition of $\rho$.
  \end{enumerate}
\end{Def}

\begin{Exam}
The non-crossing partition $\rho =\{\{1,8\}, \{2,3\}, \{4,6,7\},\{5\},\{9,10\}\}$ can be described as 
 \begin{center}
      \begin{tikzpicture}[scale=0.5]
      \node at (0,0) {$\rho=$};
        \foreach \x in {1,...,10}{
           \draw[circle,fill] (\x,0)circle[radius=1mm];
           \node at (\x,-0.8) {$\x$};
        }
        \foreach \x/\y in {1/8,2/3, 4/6,6/7, 9/10}{
           \draw(\x,0) to[bend left=45] (\y,0);
        }
      \end{tikzpicture}
\end{center}
and it has the four inner partitions $\rho_1=\{ \{2,3\}, \{4,6,7\},\{5\}\}, \rho_2=\{ \{4,6,7\},\{5\}\} , \rho_3=\{\{5\}\},\rho_4=\{\{9,10\}\}$.  
Any other subsets of $\rho$ are not inner partitions; for example, $\rho' =\{\{2,3\}\}$ has the support $\{2,3\}$ of interval form, but the Kreweras complement of $\rho\setminus \rho'$ is described by the dashed curves and white singletons in the picture
  \begin{center}
      \begin{tikzpicture}[scale=0.5]
        \foreach \x in {1,...,8}{
           \draw[circle,fill] (2*\x-1,0)circle[radius=1mm];
        }
        \foreach \x/\y in {1/6,2/4,4/5, 7/8}{
           \draw(2*\x-1,0) to[bend left=45] (2*\y-1,0);
        }
          \foreach \x in {1,...,8}{
           \draw[circle] (2*\x,0)circle[radius=1mm];
        }
        \foreach \x/\y in {1/5,2/3,6/8}{
           \draw[dashed](2*\x,0) to[bend left=45] (2*\y,0);
        }
        \node at (1,-0.8) {$1$};
          \node at (2,-0.73) {$\overline{1}$};
          \foreach \x in {4,...,10}{
          \node at (2*\x-5,-0.8) {$\x$};
        }
       \foreach \x in {4,...,10}{
          \node at (2*\x-4,-0.73) {$\overline{\x}$};
        }
      \end{tikzpicture}
\end{center}
so that the position of the removed block $\{2,3\}$ was at the point $\overline{1}$, which was not the last point of the block $\{\overline{1},\overline{7}\}$.

  \end{Exam}

The goal is then to demonstrate that each $\rho \in \NC(k)$ has exactly $\abs{\rho}-1$ Kreweras decompositions. The proof is based on induction, which depends on the following nesting structure of inner partitions.

\begin{Lem} \label{lem:inner_partition}
  Suppose that $\rho \in \NC(k)$ and its first block which contains $1$ divides $[k]$ into (non-empty) $l$ segments $I_1, \dots, I_l$.
  Then $\rho_j := \rho \! \mid_{I_j}$ is an inner partition of $\rho$ for every $j$, and moreover, every inner partition of $\rho_j$ is an inner partition of $\rho$.
  Conversely, any inner partition of $\rho$ is some $\rho_j$ or its inner partition.
\end{Lem}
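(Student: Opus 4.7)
The plan is to split the lemma into three claims — (A) each $\rho_j$ is an inner partition of $\rho$; (B) each inner partition of $\rho_j$ is again an inner partition of $\rho$; and (C) every inner partition of $\rho$ arises in one of these two ways — and to exploit two structural facts. The first is purely combinatorial: since $\rho$ is non-crossing and $B_1 = \{b_1 = 1 < b_2 < \dots < b_p\}$, every block of $\rho$ other than $B_1$ lies inside a single segment $I_j$, giving the clean decomposition $\rho = \{B_1\} \sqcup \rho_1 \sqcup \cdots \sqcup \rho_l$. The second is a locality property of the Kreweras complement within each arc of $B_1$, which I formulate below.

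For (A), I would show that the position of $I_j$ in $\bar\rho := \rho \setminus \rho_j$ corresponds to a singleton of $K(\bar\rho)$. After deleting every point of $I_j$, the two elements of $B_1$ flanking $I_j$ (or, in the tail case, the rightmost element $b_p$) become adjacent in the support of $\bar\rho$, leaving exactly one overlined point between them (namely $\overline{b_i}$, or $\overline{b_p}$ in the tail case) to represent the gap left by $I_j$. Since both of its non-overlined neighbors lie in $B_1$, this overlined point cannot share a block of $K(\bar\rho)$ with any other overlined point without crossing $B_1$; hence it is a singleton of $K(\bar\rho)$, and is trivially a Kreweras point.

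For (B), I would invoke the following locality principle: if $\sigma \subseteq \rho_j$ has support $[i',j'] \subseteq I_j$ and $\nu := \rho \setminus \sigma$, then the restriction of $K(\nu)$ to the overlined points lying strictly inside the arc of $B_1$ that contains $I_j$ agrees with the corresponding restriction of $K(\rho_j \setminus \sigma)$, because any block of $K(\nu)$ reaching outside that arc would necessarily cross $B_1 \in \nu$. With this in hand, the Kreweras-point condition $\sigma$ enjoys inside $\rho_j$ transfers verbatim to $\rho \setminus \sigma$. For (C), the key observation is that if $\sigma$ is any inner partition of $\rho$ then $B_1 \notin \sigma$: indeed, $B_1 \in \sigma$ would force $1$ into the support of $\sigma$, so the support would be of the form $[1,j']$, and then $\rho \setminus \sigma$, supported on $[j'+1,k]$, would have no overlined point available to serve as the position of $\sigma$, contradicting the Kreweras-point condition. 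Hence the support of $\sigma$ is disjoint from $B_1$ and, being an interval, must lie in a single segment $I_j$, so $\sigma \subseteq \rho_j$. Either $\sigma = \rho_j$, or $\sigma \subsetneq \rho_j$, in which case the locality principle applied in reverse shows that $\sigma$ is an inner partition of $\rho_j$.

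The main obstacle is formalizing the locality principle used in (B) and (C): carefully checking that the partition induced by $K(\rho \setminus \sigma)$ on the overlined points inside a fixed arc of $B_1$ coincides with the one induced by $K(\rho_j \setminus \sigma)$. This reduces to tracking which overlined points may share a block of $K(\cdot)$ under the non-crossing constraint imposed by $B_1$, with a small amount of extra care required to handle the tail arc $(b_p,k]$ on the same footing as the interior arcs $(b_i, b_{i+1})$.
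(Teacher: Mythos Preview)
Your proposal is correct and takes essentially the same approach as the paper: both split the lemma into the same three claims and both hinge on the locality of the Kreweras complement inside each arc cut out by the first block $B_1$. The paper's version is much terser---it declares (A) ``clear'' and asserts without argument that the support of any inner partition lies in some $I_j$, which is precisely what your $B_1\notin\sigma$ step justifies---and in (C) it splits cases on whether $\sigma$ contains the first block of $\rho_j$ rather than on $\sigma=\rho_j$ versus $\sigma\subsetneq\rho_j$, but these are cosmetic differences rather than a different route.
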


\begin{proof}
  It is clear that all $\rho_j \; (j=1, \dots, l)$ are inner partitions of $\rho$.
  Then we take any inner partition $\underline{\rho_j}$ of $\rho_j$ for $j = 1, \dots, l$.
  Note that the Kreweras complement $\K(\rho_j)$ equals $\K(\rho)$ restricted to the interval $I_j$.
  Hence, since $\underline{\rho_j}$ is at a Kreweras point of the outer partition $\overline{\rho_j}=\rho_j \setminus\underline{\rho_j}$,
  $\underline{\rho_j}$ is also at a Kreweras point of $\rho \setminus \underline{\rho_j}$.

  Conversely, we take any inner partition $\underline{\rho}$ of $\rho$.  By the definition of inner partitions, $\underline{\rho}$ is supported on some interval $I_j$.
  If $\underline{\rho}$ contains the first block of $\rho_j$, then $\underline{\rho}$ equals $\rho_j$.
 Otherwise, the support of $\underline{\rho}$ is a sub-interval of $I_j$ which does not intersect the first block of $\rho_j$, and since $\underline{\rho}$ is at a Kreweras point of the outer partition $\overline{\rho}$, $\underline{\rho}$ is also at a Kreweras point of $\rho_j \setminus \underline{\rho}$.
\end{proof}

\begin{Prop} \label{prop:Kreweras_decomposition} Let $k \ge 2$.
 Each $\rho \in \NC(k)$ has exactly $\abs{\rho}-1$ Kreweras decompositions.
\end{Prop}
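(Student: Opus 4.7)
The plan is to prove this by induction on $k$, with Lemma \ref{lem:inner_partition} doing the heavy lifting. The base case $k=1$ is vacuous: $|\rho|-1=0$ and no Kreweras decomposition exists since both $\overline{\rho}$ and $\underline{\rho}$ must be nonempty. For the inductive step, assume the claim for all non-crossing partitions on ordered ground sets of size less than $k$, and fix $\rho \in \NC(k)$. If $\abs{\rho}=1$ the claim is again vacuous. Otherwise, let $F$ be the first block of $\rho$, let $I_1, \dots, I_l$ be the segments into which $F$ cuts $[k]\setminus F$, and set $\rho_j := \rho\mid_{I_j}$. Since $F$ is nonempty, each $\abs{I_j} < k$, so the inductive hypothesis applies to each $\rho_j$.

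The key point is that Lemma \ref{lem:inner_partition} enumerates the inner partitions of $\rho$ as the disjoint union
$$
\{\rho_1, \dots, \rho_l\} \;\sqcup\; \bigsqcup_{j=1}^{l} \{\text{inner partitions of } \rho_j\}.
$$
Disjointness holds because $\rho_j$ has support exactly $I_j$, whereas any inner partition of $\rho_j$ has support strictly contained in $I_j$ (the complementary outer partition must be nonempty), while the $I_j$'s themselves are pairwise disjoint. Writing $N(\sigma)$ for the number of Kreweras decompositions of $\sigma$, this enumeration combined with the inductive hypothesis yields
$$
N(\rho) \;=\; l + \sum_{j=1}^{l} N(\rho_j) \;=\; l + \sum_{j=1}^{l} (\abs{\rho_j} - 1) \;=\; \sum_{j=1}^{l} \abs{\rho_j} \;=\; \abs{\rho} - 1,
$$
where the last equality is the identity $\abs{\rho} = 1 + \sum_j \abs{\rho_j}$, accounting for the block $F$.

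The genuine combinatorial content — that the Kreweras-point condition for inner partitions of $\rho_j$ (computed via $K(\rho_j)$) is compatible with that for inner partitions of $\rho$ (computed via $K(\rho)$) — has already been absorbed into Lemma \ref{lem:inner_partition} through the observation $K(\rho_j) = K(\rho)\mid_{I_j}$. Once that lemma is granted, the rest of the argument is essentially bookkeeping. The only points I would verify carefully are the disjointness of the above enumeration and the counting identity $\abs{\rho} = 1 + \sum_j \abs{\rho_j}$; I do not expect genuine obstacles beyond what Lemma \ref{lem:inner_partition} already resolves.
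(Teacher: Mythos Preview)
Your proof is correct and follows essentially the same inductive route as the paper: use Lemma \ref{lem:inner_partition} to identify the inner partitions of $\rho$ as the $\rho_j$ together with the inner partitions of each $\rho_j$, then count via the induction hypothesis to get $l + \sum_j(\abs{\rho_j}-1) = \abs{\rho}-1$. Your write-up is slightly more explicit about disjointness and the identity $\abs{\rho} = 1 + \sum_j \abs{\rho_j}$, but the argument is the same.
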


\begin{proof}
The proof runs  by induction.
  It is clear that the statement is true when $k=2$.
  Then we assume the statement is true up to $k-1$ and take $\rho \in \NC(k) \; (\abs{\rho}>1)$.
  Suppose that the first block of $\rho$ divides $[k]$ into $l$ segments $I_1, \dots, I_l$.
  Then all $\{ \rho_j = \rho \! \mid_{I_j} \}_{j=1}^{l}$ are inner partitions of $\rho$.
  By Lemma \ref{lem:inner_partition}, a subset of $\rho$ is an inner partition of $\rho$ if and only if
  it is one of $\{ \rho_j \}_{j=1}^{l}$ or an inner partition of some $\rho_j$.
  Therefore, by the induction hypothesis,
  the number of inner partitions of $\rho$ is $l + \sum_{j =1}^{l} (\abs{\rho_j} - 1) = \abs{\rho} - 1$.
\end{proof}

\begin{Exam}
We take $\rho \in \NC(27)$ to be
  \[
    \begin{tikzpicture}[scale=0.5]
      \foreach \x in {1,...,27}{
         \draw[circle,fill] (\x,0)circle[radius=1mm];
          \node at (\x,-0.8) {$\x$};
      }
      \foreach \x/\y in {1/8,8/9,9/16,2/7,3/6,4/5,10/11,12/14,17/20,18/19,21/24,24/27,22/23} {
         \draw(\x,0) to[bend left=45] (\y,0);
      }
    \end{tikzpicture}
  \]
in which $\abs{\rho} = 14$.
  The three non-crossing partitions 
    \[
      \begin{tikzpicture}[scale=0.5]
      \node at (0,0) {$\rho_1=$};
        \foreach \x in {2,...,7}{
           \draw[circle,fill] (\x,0)circle[radius=1mm]node[below]{$\x$};
        }
        \foreach \x/\y in {2/7,3/6,4/5} {
           \draw(\x,0) to[bend left=45] (\y,0);
        \node at (7.5,0) {,};
        }
      \node at (10,0) {$\rho_2=$};
        \foreach \x in {10,...,15}{
           \draw[shift={(2,0)},circle,fill] (\x,0)circle[radius=1mm]node[below]{$\x$};
        }
        \foreach \x/\y in {10/11,12/14} {
           \draw[shift={(2,0)}](\x,0) to[bend left=45] (\y,0);
           \node at (17.5,0) {,};
        }
      \end{tikzpicture}
    \]
    \[
      \begin{tikzpicture}[scale=0.5]
       \node at (15,0) {$\rho_3=$};
        \foreach \x in {17,...,27}{
           \draw[circle,fill] (\x,0)circle[radius=1mm]node[below]{$\x$};
        }
        \foreach \x/\y in {17/20,18/19,21/24,24/27,22/23} {
           \draw(\x,0) to[bend left=45] (\y,0);
        }
      \end{tikzpicture}
    \]
are also inner partitions of $\rho$ and the two inner partitions of $\rho_1$
  \[
    \begin{tikzpicture}[scale=0.5]
      \foreach \x in {3,...,6}{
         \draw[circle,fill] (\x,0)circle[radius=1mm]node[below]{$\x$};
      }
      \foreach \x/\y in {3/6,4/5} {
         \draw(\x,0) to[bend left=45] (\y,0);
      }
       \node at (6.5,0) {,};
      \foreach \x in {4,5}{
         \draw[shift={(4,0)},circle,fill] (\x,0)circle[radius=1mm]node[below]{$\x$};
      }
      \foreach \x/\y in {4/5} {
         \draw[shift={(4,0)}](\x,0) to[bend left=45] (\y,0);
      }
    \end{tikzpicture}
  \]
  are inner partitions of $\rho$.
  In the same way, the three inner partitions of $\rho_2$
  \[
    \begin{tikzpicture}[scale=0.5]
      \foreach \x in {12,...,15}{
         \draw[circle,fill] (\x,0)circle[radius=1mm]node[below]{$\x$};
      }
      \foreach \x/\y in {12/14} {
         \draw(\x,0) to[bend left=45] (\y,0);
      }
    \node at (15.5,0) {,};
      \foreach \x in {13}{
         \draw[shift={(4,0)},circle,fill] (\x,0)circle[radius=1mm]node[below]{$\x$};
      }
       \node at (17.5,0) {,};
      \foreach \x in {15}{
         \draw[shift={(4,0)},circle,fill] (\x,0)circle[radius=1mm]node[below]{$\x$};
      }
    \end{tikzpicture}
  \]
  and the five inner partitions of $\rho_3$
  \[
    \begin{tikzpicture}[scale=0.5]
      \foreach \x in {18,19}{
         \draw[circle,fill] (\x,0)circle[radius=1mm]node[below]{$\x$};
      }
       \node at (19.5,0) {,};
      \foreach \x/\y in {18/19} {
         \draw(\x,0) to[bend left=45] (\y,0);
      }
  \node at (27.5,0) {,};
       \foreach \x in {21,...,27}{
         \draw[circle,fill] (\x,0)circle[radius=1mm]node[below]{$\x$};
      }
      \foreach \x/\y in {21/24,24/27,22/23} {
         \draw(\x,0) to[bend left=45] (\y,0);
      }
    \end{tikzpicture}
    \begin{tikzpicture}[scale=0.5]
      \foreach \x in {22,23}{
         \draw[circle,fill] (\x,0)circle[radius=1mm]node[below]{$\x$};
      }
      \foreach \x/\y in {22/23} {
         \draw(\x,0) to[bend left=45] (\y,0);
         \node at (23.5,0) {,};
      }
    \foreach \x in {25,26}{
         \draw[circle,fill] (\x,0)circle[radius=1mm]node[below]{$\x$};
         \node at (26.5,0) {,};
      }
    \end{tikzpicture}
    \begin{tikzpicture}[scale=0.5]
      \foreach \x in {26}{
         \draw[circle,fill] (\x,0)circle[radius=1mm]node[below]{$\x$};
      }
    \end{tikzpicture}
  \]
  are also inner partitions of $\rho$.  Thus $\rho$ has 13 inner partitions: $\rho_1,\rho_2,\rho_3$ and the inner partitions of them.
\end{Exam}

\subsection{Proof of Theorem \ref{main}} \label{subsec:main}
In this subsection, we follow the notation in Theorem \ref{main}. The index $N$ is omitted for readability when no confusion occurs.
The main part of the proof of Theorem \ref{main} is the following. 

\begin{Thm} \label{thm:main1.1} 
Assume that \begin{equation}\label{eq:ass2}
\sup_{N\ge1} \E[\M_k(\m_N)] <\infty, \qquad \forall k \in 2\N.   
\end{equation}
Then, for every $k \in \N$ and $\ell \in\{1,2\}$, it holds that
  \[
    \E[\M_k(\kappa_N)^\ell] = \E[\M_k(\tau_N)^\ell] + O \left( \frac{1}{N} \right). 
  \]
\end{Thm}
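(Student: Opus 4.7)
The plan is to apply the Weingarten formula \eqref{eq:WG_formula} to \eqref{eq:key_moments} (and, for $\ell=2$, to a joint-trace analogue), expand each summand in powers of $N^{-1}$, and identify the $O(N)$ and $O(1)$ coefficients in the expansion of $\E[\Tr(\tilde X_N^k)\mid D_N]$ with $N\M_k(\m_N)$ and $-\M_k(\tau_N)$ respectively via the combinatorics of Section \ref{subsec:Isomorph} together with Theorem \ref{main2}. The first cancels against $\E[\Tr(X_N^k)\mid D_N] = N\M_k(\m_N)$, leaving $\M_k(\tau_N)$ as the main part of $\E[\M_k(\kappa_N)\mid D_N]$.

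Concretely, conditioning on $D_N$ and cyclically rewriting \eqref{eq:key_moments} as $\Tr((D_N U_N^* P_N U_N)^k)$, one applies \eqref{eq:WG_formula} with $A_i = D_N$ and $B_i = P_N$ to obtain
\[
  \E[\Tr(\tilde X_N^k)\mid D_N] = \sum_{\sigma, \pi \in \S_k} N^{k-|\sigma|} \M_\sigma(\m_N)\, (N-1)^{k-|\pi|}\, \Wg(\pi^{-1}\sigma^{-1}\gamma_k, N),
\]
using $\Tr(D_N^j) = N\M_j(\m_N)$ and $\Tr(P_N^j) = N-1$. Substituting \eqref{eq:asymptotic_Weingarten} together with the expansion $(N-1)^{k-|\pi|} = N^{k-|\pi|}(1 - (k-|\pi|)/N + O(N^{-2}))$, each summand's $N$-exponent is $k - |\sigma| - |\pi| - |\pi^{-1}\sigma^{-1}\gamma_k|$, which is at most $1$ by the triangle inequality with equality only on the geodesic, and at most $-1$ otherwise by the parity relation \eqref{eq:mod2}. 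Thus only geodesic pairs contribute at orders $N$ and $1$; the remainder is $O(1/N)$ with coefficients polynomial in $\{\M_j(\m_N)\}_{j\le k}$. Through the isomorphism $\mathcal P$ of Section \ref{subsec:Isomorph}, geodesic pairs biject with $(\nu, \rho) \in \NC(k)^2$ with $\nu \le \rho$, under which $\mu_k(\pi^{-1}\sigma^{-1}\gamma_k) = \ncmu_k(0_k, K(\rho))$, $|\pi| = |\nu| - |\rho|$, and $|K(\rho)| = k+1-|\rho|$. Applying \eqref{eq:mc} and M\"obius inversion on $\NC(k)$, the $O(N)$ piece collapses to $N\sum_{\rho\in\NC(k)} \fc_\rho(\m_N) = N\M_k(\m_N)$, and the $O(1)$ piece, coming from the $-(k-|\pi|)/N$ correction, collapses to $\sum_{\rho\in\NC(k)} (k+1-|\rho|)\fc_\rho(\m_N) = \M_k(\tau_N)$ by Theorem \ref{main2}. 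Taking expectation over $D_N$ and using \eqref{eq:ass2} to absorb the $O(1/N)$ remainder completes the case $\ell = 1$.

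For $\ell = 2$, since $\Tr(X_N^k) = N\M_k(\m_N)$ is $D_N$-measurable,
\[
  \E[\M_k(\kappa_N)^2 \mid D_N] = \E[\M_k(\kappa_N)\mid D_N]^2 + \Var(\Tr(\tilde X_N^k)\mid D_N),
\]
and the first term equals $\M_k(\tau_N)^2 + O(1/N)$ by the $\ell = 1$ analysis (and the boundedness of $\M_k(\tau_N)$ as a polynomial in moments of $\m_N$). For the conditional variance, one applies \eqref{eq:WG_formula} in $\S_{2k}$ to $\E[\Tr(\tilde X_N^k)^2\mid D_N]$ with ${\gamma_k}^{(1)}{\gamma_k}^{(2)}$ (of length $2k-2$) in place of $\gamma_{2k}$. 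Its geodesic set is $\NC(k) \times \NC(k)$ (Section \ref{subsec:Isomorph}), so the geodesic contributions factor into products of the marginal $\ell = 1$ pieces on the two $k$-blocks and reproduce $\E[\Tr(\tilde X_N^k)\mid D_N]^2$ up to $O(1/N)$; the ``connected'' non-geodesic $\S_{2k}$-configurations coupling the two blocks are what produce the variance and have $N$-exponent $\le 0$ by \eqref{eq:mod2}.

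The main obstacle is two-fold. First — shared with $\ell = 1$ — is the combinatorial step collapsing the geodesic $O(1)$ Weingarten sum into $\sum_\rho (k+1-|\rho|)\fc_\rho(\m_N)$; this rests on the entire chain $\mathcal P$-isomorphism $\Rightarrow$ $\mu_k(\mathcal P_{K(\rho)}) = \ncmu_k(0_k, K(\rho))$ $\Rightarrow$ $|K(\rho)| = k+1-|\rho|$ $\Rightarrow$ Theorem \ref{main2}, and demands care in juggling $\M_\nu$ versus $\fc_\rho$ via \eqref{eq:mc}. Second, specific to $\ell = 2$, is the variance bound $\E[\Var(\Tr(\tilde X_N^k)\mid D_N)] = O(1/N)$; this is the delicate point, since the connected $\S_{2k}$ sums are only $O(1)$ term-by-term. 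The cleanest route is likely to exploit the rank-one structure of $I-P_N$: up to cyclic rearrangement, $\Tr(X_N^k) - \Tr(\tilde X_N^k)$ is a polynomial in the single ``last column'' of $U_N$ (whose entries $|u_{Nl}|^2$ form a Dirichlet vector of known variance $O(1/N^2)$), which makes the variance bound direct; alternatively, one sums the $\gamma_k^{(1)}\gamma_k^{(2)}$-off-geodesic Weingarten contributions and checks, via parity and the full $O(N^{-2})$ correction in \eqref{eq:asymptotic_Weingarten}, that the $O(1)$ parts cancel against the subleading $O(1/N)$ shifts inherited from the $\ell=1$ expansion.
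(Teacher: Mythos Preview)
Your $\ell=1$ outline matches the paper's, but the bookkeeping is off in a way that matters. In your raw variables the geodesic condition $|\sigma|+|\pi|+|\pi^{-1}\sigma^{-1}\gamma_k|=k-1$ forces $\sigma=\mathcal P_\nu$ and $\sigma\pi=\mathcal P_\rho$ (not $\pi=\mathcal P_\rho$), so the Weingarten weight is indeed $\ncmu_k(0_k,K(\rho))=\ncmu_k(\rho,1_k)$ and $k-|\pi|=k-|\nu|+|\rho|$ as you write. With \emph{those} weights, the sum $\sum_{\nu\le\rho}(k-|\nu|+|\rho|)\M_\nu(\m_N)\ncmu_k(\rho,1_k)$ does equal $\M_k(\tau_N)$, but it does not collapse via a single application of \eqref{eq:mc}: the M\"obius factor is $\ncmu_k(\rho,1_k)$, not $\ncmu_k(\nu,\rho)$, so summing over $\nu\le\rho$ does not produce $\fc_\rho$. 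The paper fixes this by the change of variable $\pi\mapsto\pi^{-1}\gamma_k$ \emph{before} parametrizing; then $\Wg(\sigma^{-1}\pi)$ carries weight $\ncmu_k(\nu,\rho)$, the $(N-1)$-exponent becomes $\#(\pi^{-1}\gamma_k)=|K(\rho)|=k+1-|\rho|$, and \eqref{eq:cm} gives $\sum_{\nu\le\rho}\M_\nu\ncmu_k(\nu,\rho)=\fc_\rho$ in one step.

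For $\ell=2$ your route genuinely differs from the paper's. The paper does \emph{not} split into conditional-mean-squared plus conditional variance; it expands $\M_k(\kappa_N)^2=(\Tr D^k-\Tr(DUPU^*)^k)^2$ as one Weingarten sum over $\S_{2k}$ relative to $\gamma_k^{(1)}\gamma_k^{(2)}$, and then uses the rank-one identity $I=P+Q$ to telescope the four-term combination $\mathcal T_k(\pi)$ into $\sum_{i,j}\Tr_{\pi^{-1}\gamma_k^{(1)}\gamma_k^{(2)}}[I^{k-i},Q,P^{i-1},I^{k-j},Q,P^{j-1}]$. Since any cycle containing $Q$ contributes at most $1$, this gives the uniform bound $\mathcal T_k(\pi)=O(N^{\#(\pi^{-1}\gamma_k^{(1)}\gamma_k^{(2)})-1})$ \emph{for every} $\pi$, so no off-geodesic cancellation is needed; parity then kills the would-be $O(N^{-1/2})$ layer, and on the geodesic $\NC(k)\times\NC(k)$ one reads off $\mathcal T_k(\mathcal P_{(\rho_1,\rho_2)})\sim|K(\rho_1)||K(\rho_2)|N^{|K(\rho_1)|+|K(\rho_2)|-2}$ by counting Kreweras points, giving $\E[\M_k(\kappa_N)^2]=\sum_{\rho_1,\rho_2}|K(\rho_1)||K(\rho_2)|\E[\fc_{\rho_1}\fc_{\rho_2}]+O(1/N)$ directly.

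Your variance route is a legitimate alternative, and your Dirichlet idea (a) can be made to work: the Schur-complement identity gives $\gg_N=\sum_l|U_{Nl}|^2\delta_{\lambda_l}$, the Dirichlet covariances are $O(N^{-2})$, hence $\mathrm{Var}(\M_j(\gg_N)\mid D_N)=O(N^{-1})$, and then $\mathrm{Var}(\M_k(\kappa_N)\mid D_N)=O(N^{-1})$ follows via the polynomial relation \eqref{eq:power&comp}. This is arguably cleaner than the paper's $\S_{2k}$ computation. However, you do not carry it out, and your fallback (b) --- checking that off-geodesic $O(1)$ contributions cancel against subleading geodesic corrections --- is not what the paper does and looks hard to organize; the $I=P+Q$ trick is precisely what obviates any such cancellation argument.
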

\begin{Rem}
Whether the above result holds for $\ell \ge3$ is unknown. 
\end{Rem}

\begin{proof} Note first that the assumption \eqref{eq:ass2} implies that 
\begin{equation}
\sup_{N\ge1} \E[\abs{\M_\sigma(\m_N)}] <\infty, \qquad \sigma \in \S_k  
\end{equation}
for every $k \in \N$, thanks to the iterative use of Schwarz inequality and \eqref{eq:hoelder}.

\vspace{3mm}
\noindent
{\bf (i) $\ell=1$.} A key of the proof is the calculations of
\begin{equation} \label{eq:start0}
  \sum_{j=1}^{N-1}\E[\tilde\lambda_j{}^k]=  \E \circ \Tr[(PU^*DUP)^k],
\end{equation}
where $P = \diag(1, \dots, 1, 0)$.
The RHS of (\ref{eq:start0}) is calculated into
\begin{align}
      \E \circ \Tr[(PU^*DUP)^k] &= \E \circ \Tr[(DUPU^*)^k] \notag \\
      &= \sum_{\sigma, \pi \in \S_k} \E \circ \Tr_\sigma[D, \dots, D] \Tr_\pi[P, \dots, P]\Wg(\pi^{-1}\sigma^{-1}\gamma_k) \label{eq:start}\\
     &= \sum_{\sigma, \pi \in \S_k} \E \circ \Tr_\sigma[D, \dots, D] \Tr_{\pi^{-1}\gamma_k}[P, \dots, P]\Wg(\sigma^{-1}\pi) \notag \\
    &= \sum_{\sigma, \pi \in \S_k} N^{\# (\sigma)} \E \circ \tr_\sigma[D, \dots, D] (N-1)^{\# (\pi^{-1}\gamma_k)}\Wg(\sigma^{-1}\pi),
      \label{eq:WG1}
\end{align}
where \eqref{eq:WG_formula} was used on the second line and the change of variables $\pi\mapsto \pi^{-1}\gamma_k$ was employed on the third line.

On the other hand, if the projection $P$ is replaced by the identity $I$ in (\ref{eq:start}), then the same calculations lead to
\begin{align}
    \sum_{i=1}^{N} \E[{\lambda_i}^k] &= \E \circ \Tr[D^k] = \E \circ \Tr[(DUIU^*)^k]  \notag \\
    &= \sum_{\sigma, \pi \in \S_k} N^{\# (\sigma)} \E \circ \tr_\sigma[D, \dots, D] N^{\# (\pi^{-1}\gamma_k)}\Wg(\sigma^{-1}\pi). \label{eq:WG2}
\end{align}
Taking the difference of \eqref{eq:WG1} and \eqref{eq:WG2} provides
\begin{equation*}
  \begin{split}
    \E [\M_k(\kappa_N)]
    &= \sum_{i=1}^{N} \E[{\lambda_i}^k] - \sum_{j=1}^{N-1}\E[\tilde\lambda_j{}^k] \\
    &= \sum_{\sigma, \pi \in \S_k} N^{\# (\sigma)} \E \circ \tr_\sigma[D, \dots, D]
    \#(\pi^{-1}\gamma_k)N^{\#(\pi^{-1}\gamma_k) -1} \left( 1+ O(N^{-1})\right)\Wg(\sigma^{-1}\pi).
  \end{split}
\end{equation*}
Here we use the asymptotic expansion \eqref{eq:asymptotic_Weingarten} of the Weingarten functions to get
\begin{equation}\label{eq:tau1}
    \E [\M_k(\kappa_N)] = \sum_{\abs{\sigma} + \abs{\sigma^{-1}\pi} + \abs{\pi^{-1}\gamma_k} = k-1}
    \#(\pi^{-1}\gamma_k) \E[ \M_\sigma(\m_N)] \mu_k(\sigma^{-1}\pi)
    + O \left( \frac{1}{N} \right).
\end{equation}
Using the isomorphism explained in Section \ref{subsec:Isomorph}, we may rewrite \eqref{eq:tau1} in terms of non-crossing partitions: 
\begin{align}
   \E [\M_k(\kappa_N)]
    &= \sum_{ \nu \le \rho \in \NC(k)} \abs{\K(\rho)}\, \E[\M_\nu(\m_N)] \ncmu_k(\nu, \rho) +O \left( \frac{1}{N} \right)  \notag \\
    &= \sum_{ \rho \in \NC(k)} \abs{\K(\rho)}\, \E[\fc_\rho(\m_N)] +O \left( \frac{1}{N} \right),  \label{eq:tau2}
\end{align}
where the cumulant--moment formula \eqref{eq:cm} was used in the last line.
Combining \eqref{eq:tau2} and Theorem \ref{main2} implies the desired conclusion.




\vspace{3mm}
\noindent
{\bf (ii) $\ell=2$.}  Taking the expectation of $\M_k(\kappa_N)^2 = (\Tr[D^k] - \Tr[(DUPU^*)^k])^2$ with Weingarten calculus yields
\begin{align}
  &\E[\M_k(\kappa_N)^2] \notag \\
  &= \E \circ \Tr_{{\gamma_k}^{(1)}{\gamma_k}^{(2)}}[(DUIU^{*})^{k},(DUIU^{*})^{k}] - \E \circ \Tr_{{\gamma_k}^{(1)}{\gamma_k}^{(2)}}[(DUIU^{*})^k, (DUPU^{*})^{k}]  \notag \\
  &\quad - \E \circ \Tr_{{\gamma_k}^{(1)}{\gamma_k}^{(2)}}[(DUPU^{*})^{k},(DUIU^{*})^{k}] + \E \circ \Tr_{{\gamma_k}^{(1)}{\gamma_k}^{(2)}}[(DUPU^{*})^{k},(DUPU^{*})^{k}] \notag \\
  &=\sum_{\sigma,\pi \in \S_{2k}} N^{\#(\sigma)} \E[ \M_\sigma(\m_N) ]  \mathcal{T}_k(\pi)\Wg(\sigma^{-1}\pi) \label{eq:second_moment}
\end{align}
where
\[
  \mathcal{T}_k(\pi) := \Tr_{\pi^{-1}{\gamma_k}^{(1)}{\gamma_k}^{(2)}}[I^{k},I^{k}] - \Tr_{\pi^{-1}{\gamma_k}^{(1)}{\gamma_k}^{(2)}}[I^{k},P^{k}] - \Tr_{\pi^{-1}{\gamma_k}^{(1)}{\gamma_k}^{(2)}}[P^{k},I^{k}] + \Tr_{\pi^{-1}{\gamma_k}^{(1)}{\gamma_k}^{(2)}}[P^{k},P^{k}].
\]
Note that, for readability,  we use the abbreviation $\Tr_\sigma[A^k,B^k] = \Tr_\sigma[A_1, \dots, A_k, B_1, \dots, B_k]$ when $A = A_1 = \cdots = A_k$ and $B = B_1 = \cdots = B_k$.

By using the evident decomposition $I = P + Q$ with $Q = \diag(0, \dots, 0, 1)$,
we have the following expansion
\begin{align}
  &\Tr_{\pi^{-1}{\gamma_k}^{(1)}{\gamma_k}^{(2)}}[I^{k},I^{k}] \notag \\
  &= \Tr_{\pi^{-1}{\gamma_k}^{(1)}{\gamma_k}^{(2)}}[I^{k},I^{k-1},P+Q] \notag \\
  &=\Tr_{\pi^{-1}{\gamma_k}^{(1)}{\gamma_k}^{(2)}}[I^{k},I^{k-1},P] + \Tr_{\pi^{-1}{\gamma_k}^{(1)}{\gamma_k}^{(2)}}[I^{k},I^{k-1},Q] \notag \\
  &=\Tr_{\pi^{-1}{\gamma_k}^{(1)}{\gamma_k}^{(2)}}[I^{k},I^{k-2},P+Q,P] + \Tr_{\pi^{-1}{\gamma_k}^{(1)}{\gamma_k}^{(2)}}[I^{k},I^{k-1},Q] \notag \\
  &=\Tr_{\pi^{-1}{\gamma_k}^{(1)}{\gamma_k}^{(2)}}[I^{k},I^{k-2},P,P] + \Tr_{\pi^{-1}{\gamma_k}^{(1)}{\gamma_k}^{(2)}}[I^{k},I^{k-2},Q,P] + \Tr_{\pi^{-1}{\gamma_k}^{(1)}{\gamma_k}^{(2)}}[I^{k},I^{k-1},Q] \notag \\
  & \cdots \notag \\
  &=\Tr_{\pi^{-1}{\gamma_k}^{(1)}{\gamma_k}^{(2)}}[I^k,P^{k}] + \sum_{j=1}^{k} \Tr_{\pi^{-1}{\gamma_k}^{(1)}{\gamma_k}^{(2)}}[I^{k},I^{k-j},Q,P^{j-1}]. \label{eq:Tr1}
\end{align}
In the same way,
\begin{equation}
  \Tr_{\pi^{-1}{\gamma_k}^{(1)}{\gamma_k}^{(2)}}[P^{k},I^{k}] =
  \Tr_{\pi^{-1}{\gamma_k}^{(1)}{\gamma_k}^{(2)}}[P^k,P^{k}] + \sum_{j=1}^{k} \Tr_{\pi^{-1}{\gamma_k}^{(1)}{\gamma_k}^{(2)}}[P^{k},I^{k-j},Q,P^{j-1}].\label{eq:Tr2}
\end{equation}
Combining \eqref{eq:Tr1} and \eqref{eq:Tr2} together we get
\[
\mathcal{T}_k(\pi) = \sum_{j=1}^{k} \left(\Tr_{\pi^{-1}{\gamma_k}^{(1)}{\gamma_k}^{(2)}}[I^{k},I^{k-j},Q,P^{j-1}] - \Tr_{\pi^{-1}{\gamma_k}^{(1)}{\gamma_k}^{(2)}}[P^{k},I^{k-j},Q,P^{j-1}]\right).
\]
Again, a similar argument yields
\[
\mathcal{T}_k(\pi)=\sum_{i=1}^{k} \sum_{j=1}^{k} \Tr_{\pi^{-1}{\gamma_k}^{(1)}{\gamma_k}^{(2)}}[I^{k-i},Q,P^{i-1},I^{k-j},Q,P^{j-1}].
\]

When we decompose $\pi^{-1}{\gamma_k}^{(1)}{\gamma_k}^{(2)}$ into cycles,
the contribution of the cycle which contains $Q$ is at most $1$ in $\Tr_{\pi^{-1}{\gamma_k}^{(1)}{\gamma_k}^{(2)}}[I^{k-i},Q,P^{i-1},I^{k-j},Q,P^{j-1}]$.
Therefore, we get the upper bound
\begin{equation}\label{eq:upper_bound}
\mathcal{T}_k(\pi) = O \left( N^{\#(\pi^{-1}{\gamma_k}^{(1)}{\gamma_k}^{(2)})-1} \right) = O \left( N^{2k -\abs{\pi^{-1}{\gamma_k}^{(1)}{\gamma_k}^{(2)}}-1} \right).
\end{equation}
Here we also use the asymptotic expansion \eqref{eq:asymptotic_Weingarten} of the Weingarten functions
and two elementary facts about the length functions in symmetric groups $\S_{2k}$:
$\abs{\sigma} + \abs{\sigma^{-1}\pi} + \abs{\pi^{-1}{\gamma_k}^{(1)}{\gamma_k}^{(2)}} \ge 2k-2$ and
$\abs{\sigma} + \abs{\sigma^{-1}\pi} + \abs{\pi^{-1}{\gamma_k}^{(1)}{\gamma_k}^{(2)}} \neq 2k-1$
since $\abs{\sigma} + \abs{\sigma^{-1}\pi} + \abs{\pi^{-1}{\gamma_k}^{(1)}{\gamma_k}^{(2)}} \equiv \abs{{\gamma_k}^{(1)}{\gamma_k}^{(2)}} \equiv 2k-2 \pmod 2$ by using the length property (\ref{eq:mod2}).
Applying those facts and \eqref{eq:upper_bound} to \eqref{eq:second_moment} reveals that
\[
  \E[\M_k(\kappa_N)^2]
  = \sum_{\substack{\sigma,\pi\in\S_{2k} \\ \abs{\sigma} + \abs{\sigma^{-1}\pi} + \abs{\pi^{-1}{\gamma_k}^{(1)}{\gamma_k}^{(2)}} = 2k-2}} N^{- \abs{\sigma} - \abs{\sigma^{-1}\pi}}\E[ \M_\sigma(\m_N) ]  \mathcal{T}_k(\pi)\mu_{2k}(\sigma^{-1}\pi) + O \left( \frac{1}{N} \right).
\]
By using the isomorphism (\ref{isomoph_part}), the last expression can be rewritten in terms of non-crossing partitions: 
\[
\begin{split}
   \E[\M_k(\kappa_N)^2] &= \sum_{\substack{\nu,\rho\in \NC(2k) \\ \nu \le \rho \le ({1_k}^{(1)},{1_k}^{(2)})}} \E[ \M_\nu(\m_N)] \ncmu_{2k}(\nu,\rho) \frac{\mathcal{T}_k(\mathcal{P}_\rho)}{N^{\abs{\K(\rho)}-1}} + O \left( \frac{1}{N} \right) \\
   &= \sum_{\substack{\rho\in \NC(2k) \\ \rho \le ({1_k}^{(1)},{1_k}^{(2)})}} \E[ \fc_\rho(\m_N)] \frac{\mathcal{T}_k(\mathcal{P}_\rho)}{N^{\abs{\K(\rho)}-1}} + O \left( \frac{1}{N} \right) \\
  &= \sum_{\rho_1,\rho_2 \in \NC(k)} \E[ \fc_{\rho_1}(\m_N)\fc_{\rho_2}(\m_N)] \frac{\mathcal{T}_k(\mathcal{P}_{(\rho_1,\rho_2)})}{N^{\abs{\K(\rho_1)}+\abs{\K(\rho_2)}-2}} + O \left( \frac{1}{N} \right).
\end{split}
\]
Note that
\begin{align}
  \mathcal{T}_k(\mathcal{P}_{(\rho_1,\rho_2)}) &= \sum_{i=1}^{k} \sum_{j=1}^{k} \Tr_{(\K(\rho_1),\K(\rho_2))}[I^{k-i},Q,P^{i-1},I^{k-j},Q,P^{j-1}]  \notag \\
  &= N^{\abs{\K(\rho_1)}+\abs{\K(\rho_2)}-2} \abs{\K(\rho_1)} \abs{\K(\rho_2)} + O \left( N^{\abs{\K(\rho_1)}+\abs{\K(\rho_2)}-3} \right).   \label{eq:Tpi}
\end{align}
This is because the contribution of a cycle is $0$ if it contains both $P$ and $Q$, and is 1 if it contains $Q$ and no $P$; from those observations, the main contributions appear when both $Q$'s are at Kreweras points of $\rho_1$ and $\rho_2$, respectively, and so \eqref{eq:Tpi} follows. 
Hence we arrive at the formula
\begin{equation} \label{eq:expectation_square}
  \E[\M_k(\kappa_N)^2] = \sum_{\rho_1,\rho_2 \in \NC(k)} \abs{\K(\rho_1)} \abs{\K(\rho_2)} \,\E[ \fc_{\rho_1}(\m_N)\!\fc_{\rho_2}(\m_N)]
  + O \left( \frac{1}{N} \right).
\end{equation}
Applying Theorem \ref{main2} to the RHS finishes the proof. 
\end{proof}

\begin{Rem}
Note that the calculations for $\ell=1$ are similar to those in \cite[pp.379-393]{Nica-Speicher} where asymptotic freeness is proved for matrices rotated by independent Haar unitaries.
\end{Rem}

\begin{proof}[Proof of Theorem \ref{main}] According to Theorem \ref{main2}, $\M_k(\tau_N)^\ell$ is a polynomial on $\{\M_n(\m_N)\}_{n\in\N}$, so that Proposition \ref{prop:polynomial} allows us to pass to the limit: 
\begin{equation}
\lim_{N\to\infty}\E[\M_k(\tau_N)]  = \M_k(\tau) \quad \text{and} \quad \lim_{N\to\infty}\E[\M_k(\tau_N)^2]  = \M_k(\tau)^2, \qquad k \in \N. 
\end{equation}
Combining the above and Theorem \ref{thm:main1.1} yields that 
\begin{equation}
\lim_{N\to\infty}\E[\M_k(\kappa_N)]  = \M_k(\tau) \quad \text{and} \quad \lim_{N\to\infty}\E[\M_k(\kappa_N)^2]  = \M_k(\tau)^2, \qquad k \in \N,  
\end{equation}
which readily implies $\|\M_k(\kappa_N) - \M_k(\tau)\|_{L^2} \to 0$. In particular, $\M_k(\kappa_N)$ converges to $\M_k(\tau)$ in probability for every $k\in \N$. Since $\M_k(\gg_N)$ and $\M_k(\m)$ are respectively expressed by a common polynomial evaluated at $\{\M_k(\kappa_N)\}_{k\ge1}$ and $\{\M_k(\tau)\}_{k\ge1}$, it follows that $\M_k(\gg_N)$ converges to $\M_k(\m)$ in probability. Finally, if the moment problem for $\{\M_k(\m)\}_{k\ge1}$ is determinate then we conclude that $\gg_N$ weakly converges to $\m$ in probability by Proposition \ref{prop:weak_prob}. 
\end{proof}


\appendix 

\section{Appendix}\label{App1}
Some results on the moment method for random measures are collected below. The proofs are basic. 
Let $\pp, \pp_n, n\in \N,$ be random probability measures on $\R$ with an underlying probability space $(\Omega,\mathcal F, \mathbb P)$ below. 

\begin{Prop}\label{prop:weak_prob}
Suppose that $\pp_n,\pp, n\in \N$ have finite moments of all orders almost surely, and the moment problem for $\{\M_k(\pp)\}_{k\ge1}$ is determinate almost surely. If 
\begin{equation}\label{eq:weak_moment}
\lim_{n\to\infty}\mathbb P [\abs{\M_k(\pp_n) - \M_k(\pp)} \ge \epsilon] =0, \qquad  k \in \N,~  \epsilon>0, 
\end{equation}
then $\pp_n$ weakly converges to $\pp$ in probability: 
\begin{equation}\label{eq:weak_prob}
\lim_{n\to\infty}\mathbb P \left[\left|\int_\R f(x) \,d\pp_n(x) - \int_\R f(x) \,d\pp(x)\right| \ge \epsilon \right] =0, \qquad  f \in C_b(\R), ~ \epsilon>0. 
\end{equation}
\end{Prop}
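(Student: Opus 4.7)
The plan is to reduce weak convergence in probability to almost-sure weak convergence along subsequences via the standard subsequence principle. Fix $f \in C_b(\R)$ and $\epsilon>0$, and set $Z_n := \int_\R f\,d\pp_n$ and $Z := \int_\R f\,d\pp$. Since $|Z_n|,|Z|\le \|f\|_\infty$, the desired convergence \eqref{eq:weak_prob} is equivalent to $Z_n\to Z$ in probability, which in turn holds if and only if every subsequence of $\{Z_n\}$ admits a further subsequence converging to $Z$ almost surely.

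Given any subsequence $\{n_k\}$, the plan is to perform a diagonal extraction to obtain a further subsequence $\{n_{k_j}\}$ along which $\M_r(\pp_{n_{k_j}}) \to \M_r(\pp)$ almost surely for \emph{every} $r\in\N$ simultaneously. Concretely, the hypothesis \eqref{eq:weak_moment} and the fact that convergence in probability implies almost-sure convergence along a subsequence can be applied to $r=1$, then to $r=2$ along the resulting subsequence, and so on; a standard diagonal argument then yields a single subsequence working for all $r$. Taking the countable intersection of the corresponding full-measure events with the a.s.\ event on which $\{\M_r(\pp)\}_{r\ge1}$ is determinate produces an event $\Omega_0\subset\Omega$ with $\mathbb{P}(\Omega_0)=1$ such that, for each $\omega\in\Omega_0$, the deterministic measures $\pp_{n_{k_j}}(\omega)$ have all moments converging to those of the determinate measure $\pp(\omega)$.

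On $\Omega_0$ the classical moment-convergence theorem (moment convergence to a determinate target implies weak convergence; see e.g.\ Billingsley's \emph{Probability and Measure}) gives $\pp_{n_{k_j}}(\omega)\to\pp(\omega)$ weakly for every $\omega\in\Omega_0$. Integrating the bounded continuous function $f$ yields $Z_{n_{k_j}}(\omega)\to Z(\omega)$ on $\Omega_0$, which is the required almost-sure convergence along the chosen subsequence, and the subsequence principle closes the argument.

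The only points requiring care are the diagonal extraction (ensuring a single full-probability event works simultaneously for all moments) and the invocation of the deterministic moment-convergence theorem pointwise in $\omega$; neither presents a genuine obstacle, so the proof is essentially a packaging of standard facts.
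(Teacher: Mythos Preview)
Your proposal is correct and follows essentially the same approach as the paper: both proofs use a diagonal extraction to pass from moment convergence in probability to almost-sure convergence of all moments along a sub(sub)sequence, then apply the deterministic moment-convergence theorem pointwise in $\omega$ on a full-measure set. The only cosmetic difference is that the paper phrases the subsequence principle as a proof by contradiction (assuming \eqref{eq:weak_prob} fails along some subsequence and deriving a contradiction), whereas you invoke the equivalence ``$Z_n\to Z$ in probability iff every subsequence has an a.s.\ convergent further subsequence'' directly.
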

\begin{proof} 
For later use, we first verify the existence of a subsequence of $\{\pp_n\}_{n\ge1}$ which weakly converges to $\pp$ almost surely. 
Let $\Omega_0 \in\mathcal{F}$ be such that $\mathbb P[\Omega_0]=1$ and the moment problem for $\{\M_k(\pp^\omega)\}_{k\ge1}$ is determinate for all $\omega \in \Omega_0$. 
For $k=1$, there exists a subsequence $\{n(1,\ell)\}_{\ell=1}^\infty$ of $\N$ and $\Omega_1 \subset \Omega_0$ such that $\Omega_1 \in \mathcal F, \mathbb P[\Omega_1]=1$ and $\M_1(\pp_{n(1,\ell)}^\omega)$ converges to $\M_1(\pp^\omega)$ for all $\omega \in \Omega_1$. For $k=2,$ there exists a subsequence $\{n(2,\ell)\}_{\ell=1}^\infty$ of $\{n(1,\ell)\}_{\ell=1}^\infty$ and $\Omega_2 \subset \Omega_1$ such that $\Omega_2 \in \mathcal F, \mathbb P[\Omega_2]=1$ and $\M_2(\pp_{n(2,\ell)}^\omega)$ converges to $\M_2(\pp^\omega)$ for all $\omega \in \Omega_2$. In this way we obtain subsequences $\{n(k,\ell)\}_{\ell=1}^\infty$ and decreasing subsets $\Omega_k$ of probability one for $k \ge1$. Define $\tilde\Omega:= \cap_{k\ge1}\Omega_k$ and $n(\ell):=n(\ell,\ell)$; then $\M_k(\pp_{n(\ell)}^\omega)$ converges  to $\M_k(\pp^\omega)$ as $\ell \to \infty$ for all $\omega \in \tilde \Omega$ and all $k \in \N$. Since the moment problem for the limit sequence is determinate, we conclude by \cite[Theorem 4.5.5]{Chung} that $\pp_{n(\ell)}^\omega$ weakly converges to $\pp^\omega$ as $\ell \to \infty$ for all $\omega \in \tilde \Omega$. 

To finish the proof, suppose to the contrary that the desired conclusion \eqref{eq:weak_prob} is false: there exist $f \in C_b(\R)$, $\epsilon,\delta>0$ and a subsequence of $\{\pp_n\}_{n\ge1}$, denoted by $\{\pp_{n'}\}$, such that 
\begin{equation}\label{eq:weak_prob2}
\mathbb P \left[\left|\int_\R f(x) \,d\pp_{n'}(x) - \int_\R f(x) \,d\pp(x)\right| \ge \epsilon \right] \ge \delta, \qquad \forall n'.  
\end{equation}
However, we can extract a further subsequence of $\{\pp_{n'}\}$ which weakly converges to $\pp$ almost surely as we discussed. For this subsequence, the LHS of \eqref{eq:weak_prob2} must tend to zero, a contradiction. 
\end{proof}

\begin{Rem} 
A similar result and proof are found in \cite[p.\ 178--180]{Gre}. 
\end{Rem}

\begin{Prop}\label{prop:Lp}
Suppose that 
\begin{equation}\label{eq:ass}
\sup_{n\ge1} \E[\M_k(\pp_n)] <\infty \quad \text{and}\quad\E[\M_k(\pp)]<\infty, \qquad  k \in 2\N. 
\end{equation}
Then the condition \eqref{eq:weak_moment} is equivalent to 
\begin{equation}\label{eq:Lp_moment}
\|\M_k(\pp_n) - \M_k(\pp)\|_{L^p} \to 0, \qquad  p \in [1,\infty), ~ k \in \N. 
\end{equation}
\end{Prop}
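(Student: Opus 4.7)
The equivalence splits into two directions of unequal difficulty. The implication \eqref{eq:Lp_moment} $\Rightarrow$ \eqref{eq:weak_moment} is the routine half: Markov's inequality yields
\[
\mathbb P [|\M_k(\pp_n) - \M_k(\pp)| \ge \epsilon] \le \epsilon^{-p} \E\bigl[|\M_k(\pp_n) - \M_k(\pp)|^p\bigr] = \epsilon^{-p}\,\|\M_k(\pp_n) - \M_k(\pp)\|_{L^p}^p
\]
for any $p \ge 1$, and the right-hand side tends to zero by hypothesis (already with $p=1$).

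For the non-trivial direction \eqref{eq:weak_moment} $\Rightarrow$ \eqref{eq:Lp_moment}, my plan is to combine convergence in probability with a uniform integrability argument supplied by Vitali's convergence theorem. The essential input is the $L^q$-bound
\[
\sup_{n\ge 1} \E\bigl[|\M_k(\pp_n)|^q\bigr] < \infty, \qquad q \in [1,\infty), \quad k \in \N,
\]
and the analogous bound for $\pp$. To produce it, fix $k \in \N$ and $m \in \N$. Since $\pp_n$ is almost surely a probability measure, Jensen's inequality gives
\[
|\M_k(\pp_n)|^{2m} = \left|\int_\R x^k \, d\pp_n(x)\right|^{2m} \le \int_\R |x|^{2mk}\, d\pp_n(x) = \M_{2mk}(\pp_n),
\]
where the last equality uses that $2mk$ is even. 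Taking expectations and applying \eqref{eq:ass} with $2mk \in 2\N$ gives $\sup_n \E[|\M_k(\pp_n)|^{2m}] < \infty$, and then any $q \in [1,\infty)$ is covered by choosing $m$ with $2m \ge q$ and applying Hölder. The identical calculation applied to $\pp$ yields $\E[|\M_k(\pp)|^q] < \infty$ for all $q \ge 1$.

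With these bounds in hand, the proof is finished as follows. Given $p \in [1,\infty)$, fix $q > p$. The elementary inequality
\[
|\M_k(\pp_n) - \M_k(\pp)|^q \le 2^{q-1}\bigl(|\M_k(\pp_n)|^q + |\M_k(\pp)|^q\bigr)
\]
combined with the uniform $L^q$-bound above shows that the family $\{|\M_k(\pp_n) - \M_k(\pp)|^p\}_{n\ge 1}$ is uniformly integrable (since it is bounded in $L^{q/p}$ with $q/p>1$). Together with the convergence in probability \eqref{eq:weak_moment}, Vitali's convergence theorem then delivers $\|\M_k(\pp_n) - \M_k(\pp)\|_{L^p} \to 0$, i.e.\ \eqref{eq:Lp_moment}. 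The only substantive obstacle is establishing the uniform moment bound, and the Jensen step above reduces it directly to the hypothesis \eqref{eq:ass}; once that bound is in place, the rest is standard measure theory.
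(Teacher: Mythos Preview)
Your proof is correct and follows essentially the same route as the paper: both obtain uniform $L^q$-bounds on $\M_k(\pp_n)$ via the Jensen/H\"older estimate $|\M_k(\pp_n)|^\ell \le \M_{k\ell}(\pp_n)$ for even $\ell$, then combine these bounds with convergence in probability to upgrade to $L^p$-convergence. The only cosmetic difference is that the paper finishes with an explicit Cauchy--Schwarz splitting of $\E[|\M_k(\pp_n)-\M_k(\pp)|^p]$ over the events $\{|\M_k(\pp_n)-\M_k(\pp)|\ge\epsilon\}$ and its complement, whereas you package the same step as an appeal to Vitali's theorem.
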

\begin{proof}
It suffices to prove that  \eqref{eq:weak_moment} implies \eqref{eq:Lp_moment}; the other direction is well known. 

For $p \in [1,\infty)$ choose $\ell \in 2\N$ such that $\ell \ge p$. The H\"older inequality implies that $|\M_k(\pp_n)|^{\ell} \le \M_{k\ell}(\pp_n)$ and hence 
\begin{equation}\label{eq:hoelder}
\|\M_k(\pp_n)\|_{L^p} \le \|\M_k(\pp_n)\|_{L^\ell} \le (\E[\M_{k\ell}(\pp_n)])^{\frac1{\ell}}.  
\end{equation}
Combining the above and \eqref{eq:ass}, as well as similar inequalities for $\M_k(\pp)$, yields that 
\begin{equation}\label{eq:uniform}
\sup_{n\in\N}\|\M_k(\pp_n)\|_{L^p}<\infty\quad \text{and} \quad \|\M_k(\pp)\|_{L^p}<\infty,  \qquad k\in \N,~p \in [1,\infty).   
\end{equation}
By standard arguments we obtain  
\begin{align*}
\|\M_k(\pp_n) - \M_k(\pp)\|_{L^p}^p 
&= \E[\abs{\M_k(\pp_n) - \M_k(\pp)}^p 1_{\{\abs{\M_k(\pp_n) - \M_k(\pp)} \ge \epsilon\}}] \\
&\quad + \E[\abs{\M_k(\pp_n) - \M_k(\pp)}^p 1_{\{\abs{\M_k(\pp_n) - \M_k(\pp)} < \epsilon\}}] \\
& \le (\E[\abs{\M_k(\pp_n) - \M_k(\pp)}^{2p}])^{1/2} (\mathbb P[\abs{\M_k(\pp_n) - \M_k(\pp)} \ge \epsilon])^{1/2} + \epsilon^p \\
&\le  (\| \M_k(\pp_n)\|_{L^{2p}} + \| \M_k(\pp)\|_{L^{2p}})^p  (\mathbb P [\abs{\M_k(\pp_n) - \M_k(\pp)} \ge \epsilon])^{1/2} + \epsilon^p.  
\end{align*}  
Applying \eqref{eq:uniform} and \eqref{eq:weak_moment} to the above finishes the proof. 
\end{proof}

\begin{Prop}\label{prop:polynomial} Suppose that \eqref{eq:weak_moment} and \eqref{eq:ass} hold. Then 
\begin{equation*}
\lim_{n\to\infty}\E[P(\M_1(\pp_n), \M_2(\pp_n), \dots, \M_k(\pp_n))] = \E[P(\M_1(\pp), \M_2(\pp), \dots, \M_k(\pp))] 
\end{equation*}
for every $k \in \N$ and every polynomial $P \in \C[x_1,x_2,\dots,x_k]$. 
\end{Prop}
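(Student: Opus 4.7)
By linearity of expectation, it suffices to handle a single monomial $P(x_1,\dots,x_k)=x_1^{a_1}\cdots x_k^{a_k}$ with nonnegative integer exponents. Set $Y_n^{(i)} := \M_i(\pp_n)^{a_i}$ and $Y^{(i)} := \M_i(\pp)^{a_i}$ for $i=1,\dots,k$; the goal is to show
\[
\E\!\left[\prod_{i=1}^k Y_n^{(i)}\right] \too \E\!\left[\prod_{i=1}^k Y^{(i)}\right], \qquad n \to \infty.
\]

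First I would apply Proposition \ref{prop:Lp}: hypotheses \eqref{eq:weak_moment} and \eqref{eq:ass} give $\|\M_i(\pp_n)-\M_i(\pp)\|_{L^p} \to 0$ for every $p \in [1,\infty)$ and every $i\in\N$. Together with the inequality \eqref{eq:hoelder} and the assumption \eqref{eq:ass}, we also obtain uniform bounds $\sup_{n}\|\M_i(\pp_n)\|_{L^p}<\infty$ and $\|\M_i(\pp)\|_{L^p}<\infty$ for every $p \in [1,\infty)$. Raising to the $a_i$-th power and using Hölder once more, the same conclusions transfer to $Y_n^{(i)}$ and $Y^{(i)}$: namely, $Y_n^{(i)} \to Y^{(i)}$ in every $L^p$ and these random variables are uniformly bounded in every $L^p$.

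Next I would handle the product by a standard telescoping argument. Write
\[
\prod_{i=1}^k Y_n^{(i)} - \prod_{i=1}^k Y^{(i)} = \sum_{j=1}^{k} \Bigl(\prod_{i<j} Y_n^{(i)}\Bigr)\bigl(Y_n^{(j)}-Y^{(j)}\bigr)\Bigl(\prod_{i>j} Y^{(i)}\Bigr).
\]
Taking $L^1$ norms and applying the generalized Hölder inequality with exponents $k$ on each of the $k$ factors yields
\[
\left\|\prod_{i=1}^k Y_n^{(i)} - \prod_{i=1}^k Y^{(i)}\right\|_{L^1}
\le \sum_{j=1}^{k} \Bigl(\prod_{i<j}\|Y_n^{(i)}\|_{L^k}\Bigr)\, \|Y_n^{(j)}-Y^{(j)}\|_{L^k} \Bigl(\prod_{i>j}\|Y^{(i)}\|_{L^k}\Bigr).
\]
By the uniform $L^k$ bounds just established, the product factors other than the middle one are bounded uniformly in $n$, while the middle factor tends to $0$ as $n\to\infty$ by the $L^k$-convergence of moments. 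Hence each term in the sum vanishes in the limit, giving $\E[\prod_i Y_n^{(i)}] \to \E[\prod_i Y^{(i)}]$, as required.

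There is no serious obstacle: the whole argument is a routine combination of Proposition \ref{prop:Lp} with Hölder's inequality and linearity. The only point that requires a moment's attention is ensuring that the $L^p$-bounds are uniform in $n$ for every $p$, which is precisely what \eqref{eq:ass} and \eqref{eq:hoelder} provide.
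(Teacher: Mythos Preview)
Your proof is correct and follows essentially the same route as the paper: invoke Proposition~\ref{prop:Lp} to upgrade \eqref{eq:weak_moment} and \eqref{eq:ass} to $L^p$-convergence of each moment, and then pass to products. The paper simply cites as a ``standard fact'' that if $Y_n\to Y$ and $Z_n\to Z$ in every $L^p$ then $Y_nZ_n\to YZ$ in every $L^p$; your telescoping-plus-H\"older argument is precisely the usual proof of that fact, spelled out in detail.
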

\begin{proof} This is a consequence of Proposition \ref{prop:Lp} and the following standard fact: if random variables $Y,Z, Y_n,Z_n,  n\in \N$ satisfy $Y_n\to Y $ in $L^p$ and $Z_n \to Z$ in $L^p$ for all $p \in[1,\infty)$, then $Y_n Z_n \to YZ$ in $L^p$ for all $p \in [1,\infty)$. 
\end{proof}

\section*{Acknowledgments}
T.H.\ is supported by JSPS Grant-in-Aid for Young Scientists 19K14546 and 18H01115. This work was supported by JSPS Open Partnership Joint Research Projects grant no.\ JPJSBP120209921 and Bilateral Joint Research Projects (JSPS-MEAE-MESRI, grant no.\ JPJSBP120203202).  The authors express sincere thanks to Sho Matsumoto for pointing out a proof of Theorem \ref{main2} based on \cite{Lassalle} as mentioned in the subsequent paragraph.


\begin{flushleft}

Department of Mathematics, Hokkaido University, North 10 West 8, Kita-Ku, Sapporo 060-0810, Japan

email: kfujie@eis.hokudai.ac.jp

\vspace{5mm}

Department of Mathematics, Hokkaido University, North 10 West 8, Kita-Ku, Sapporo 060-0810, Japan

email: thasebe@math.sci.hokudai.ac.jp

\end{flushleft}

\end{document}